\newtheorem{Theorem}{Theorem}[section]
\newtheorem{Lemma}[Theorem]{Lemma} 
\newtheorem{Proposition}[Theorem]{Proposition}
\newtheorem{Example}[Theorem]{Example}
\newtheorem{Definition}[Theorem]{Definition}
\newtheorem*{Theorem A}{Theorem A}
\newcommand*{\overbar}[1]{\mkern 1.5mu\overline{\mkern-1.5mu#1\mkern-1.5mu}\mkern 1.5mu}
\begin{document}
\author{Charlie Beil}
 \address{Institut f\"ur Mathematik und Wissenschaftliches Rechnen, Universit\"at Graz, Heinrichstrasse 36, 8010 Graz, Austria.}
 \email{charles.beil@uni-graz.at}
 \title[Nonnoetherian coordinate rings with unique maximal depictions]{Nonnoetherian coordinate rings with\\ unique maximal depictions}
 \keywords{Non-noetherian rings, foundations of algebraic geometry.}
 \subjclass[2010]{13C15, 14A20}
 \date{}
 
\begin{abstract}
A depiction of a nonnoetherian integral domain $R$ is a special coordinate ring that provides a framework for describing the geometry of $R$. 
We show that if $R$ is noetherian in codimension 1, then $R$ has a unique maximal depiction $T$.
In this case, the geometric dimensions of the points of $\operatorname{Spec}R$ may be computed directly from $T$.
If in addition $R$ has a normal depiction $S$, then $S$ is the unique maximal depiction of $R$. 
\end{abstract}

\maketitle

\section{Introduction}

In this article all algebras are assumed to be commutative integral domains over an algebraically closed base field $k$. 
Depictions were introduced in \cite{B3} to provide a framework for describing the geometry of nonnoetherian algebras with finite Krull dimension. 
A depiction of a nonnoetherian algebra $R$ is a finitely generated algebra $S$ that is as close as possible to $R$, in a suitable geometric sense (Definition \ref{depiction def}). 
In this framework, the geometry of the maximal spectrum $\operatorname{Max}R$ is viewed as the algebraic variety $\operatorname{Max}S$, together with a collection of algebraic sets of $\operatorname{Max}S$ which are identified as `smeared-out' positive dimensional closed points \cite{B2}. 

Depictions have played an essential role in understanding the algebraic and representation theoretic properties of a class of quiver algebras called dimer algebras \cite{B1,B4,B5}.
However, there are many open questions regarding the fundamental nature of depictions; for example, it is not known whether every subalgebra of a finite type integral domain admits a depiction, or whether every depiction is contained in a maximal depiction.
Here we consider the question: \textit{What algebras admit unique maximal depictions?}

In general, maximal depictions need not be unique.
Indeed, consider the rings
$$S = k[x,y,z] \ \ \ \text{ and } \ \ \ R = k + xyS.$$
Then the overrings 
$$S[x^{-1}] \ \ \text{ and } \ \ S[y^{-1}]$$
are both depictions of $R$, whereas their minimal proper overring $S[x^{-1}, y^{-1}]$ is not \cite[Proposition 3.19]{B3}.
To identify a class of algebras that admit unique maximal depictions, we introduce the following definition.

\begin{Definition} \label{noetherian in codimension 1} \rm{
We say $R$ is \textit{noetherian in codimension 1} if $R$ admits a depiction $S$ such that each codimension 1 subvariety of $\operatorname{Max}S$ intersects the open set 
$$U_{S/R} := \left\{ \mathfrak{n} \in \operatorname{Max}S \ | \ R_{\mathfrak{n}\cap R} = S_{\mathfrak{n}} \right\}.$$
}\end{Definition}

We will show that this definition is independent of the choice of depiction $S$ (Proposition \ref{independent}). 
Furthermore, if $R$ is noetherian in codimension $1$, then for each height $1$ prime $\mathfrak{q} \in \operatorname{Spec}S$, the localization $R_{\mathfrak{q}\cap R}$ is noetherian (Lemma \ref{lemma 0}.3).

Our main theorem is the following.

\begin{Theorem} \label{main} (Theorems \ref{geometric height} and \ref{normal theorem}.)
Suppose $R$ is noetherian in codimension 1.
Let $S$ be any depiction of $R$, and consider the global sections ring,
$$T := \bigcap_{\mathfrak{n} \in U_{S/R}} S_{\mathfrak{n}} = \Gamma(U_{S/R}).$$
\begin{enumerate}
 \item $T$ is the unique maximal depiction of $R$.
In particular, $T$ is independent of the choice of depiction $S$.
 \item For each $\mathfrak{p} \in \operatorname{Spec}R$ there is some $\mathfrak{t} \in \operatorname{Spec}T$ lying over $\mathfrak{p}$ such that the geometric dimension of $\mathfrak{p}$ equals the Krull dimension of $T/\mathfrak{t}$,
$$\operatorname{gdim} \mathfrak{p} = \operatorname{dim}T/\mathfrak{t}.$$
 \item Let $\overbar{S}$ be the normalization of $S$.
Then
$$S \subseteq T \subseteq \overbar{S}.$$
In particular, if $S = \overbar{S}$ is normal, then $S$ is the unique maximal depiction of $R$, as well as the unique normal depiction of $R$.
\end{enumerate}
\end{Theorem}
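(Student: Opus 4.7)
The plan is to address the three parts in the order (3), (1), (2), since the sandwich $S \subseteq T \subseteq \overbar{S}$ established in (3) drives every subsequent argument. The inclusion $S \subseteq T$ is immediate from $S \subseteq S_{\mathfrak{n}}$. For $T \subseteq \overbar{S}$, I would invoke the codimension-one hypothesis: for each height-one prime $\mathfrak{q}$ of $S$, pick $\mathfrak{n} \in U_{S/R}$ containing $\mathfrak{q}$, whence $S_{\mathfrak{n}} \subseteq S_{\mathfrak{q}}$, so that
$$T \subseteq \bigcap_{\operatorname{ht}\mathfrak{q}=1} S_{\mathfrak{q}}.$$
Each such $\mathfrak{q}$ is dominated by a height-one prime $\mathfrak{Q}$ of $\overbar{S}$ via going-up, giving $S_{\mathfrak{q}} \subseteq \overbar{S}_{\mathfrak{Q}}$; intersecting the resulting DVRs recovers $\overbar{S}$ by the Krull property of the normalization.

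Next, to show $T$ is itself a depiction, I would use $T \subseteq \overbar{S}$ together with the finiteness of $\overbar{S}$ over $S$ in the algebro-geometric setting at hand to conclude that $T$ is noetherian and finitely generated as a $k$-algebra; the key point is that the conductor of $T$ into $\overbar{S}$ has height at least two by construction of $T$. The axioms of Definition \ref{depiction def} for the inclusion $R \hookrightarrow T$ are then checked by identifying $U_{T/R}$ with the preimage of $U_{S/R}$ under $\operatorname{Spec}T \to \operatorname{Spec}S$, and transporting the depiction axioms already known for $S$ across this correspondence, using that $T$ and $S$ agree locally at every $\mathfrak{n} \in U_{S/R}$.

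For maximality, let $S'$ be another depiction of $R$. I would show $S' \subseteq T$ by verifying that every element of $S'$ lies in $S_{\mathfrak{n}}$ for each $\mathfrak{n} \in U_{S/R}$: this reduces to a local comparison inside the common fraction field $K(R)$, using that at good points $\mathfrak{m}'$ of $S'$ one has $S'_{\mathfrak{m}'} = R_{\mathfrak{m}' \cap R}$, and on the $S$-side the analogous identity holds at $\mathfrak{n} \in U_{S/R}$. Uniqueness of $T$ and independence from the choice of $S$ then follow formally. In the normal case, the standard characterization $S = \bigcap_{\operatorname{ht}\mathfrak{q}=1} S_{\mathfrak{q}}$ for a normal noetherian domain combined with the first paragraph collapses the sandwich to $T = S$, yielding the last statement of (3). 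Finally for (2), using the definition of $\operatorname{gdim}\mathfrak{p}$ from \cite{B3} as the dimension of the smeared-out closed point inside a depiction together with the uniqueness of $T$, the minimal primes $\mathfrak{t}$ of $T$ lying over $\mathfrak{p}$ cut out the irreducible components of this smeared-out point, so that $\operatorname{gdim}\mathfrak{p} = \operatorname{dim}T/\mathfrak{t}$.

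The main obstacle is the second paragraph: confirming that the infinite intersection $T$ is a finitely generated noetherian $k$-algebra, and that the depiction axioms genuinely transfer from $S$ to $T$. The codimension-one hypothesis is what makes this feasible, since it keeps the locus where $T$ differs from $S$ in codimension at least two, but pinning down the conductor ideal precisely, and then matching the local structure of $T$ with $R$ on all of $U_{T/R}$, will be the crux of the argument.
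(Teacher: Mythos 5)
Your sandwich argument $S \subseteq T \subseteq \overbar{S}$ is essentially the paper's Lemma~\ref{contained in normalization}, though you re-derive $\bigcap_{\operatorname{ht}\mathfrak{q}=1} S_{\mathfrak{q}} = \overbar{S}$ via going-up to $\overbar{S}$ when the paper simply cites Matsumura's theorem that this intersection \emph{is} the normalization of a noetherian domain. Your finite-generation argument is then overcomplicated: once $T \subseteq \overbar{S}$ is known, $T$ is a submodule of the noetherian $S$-module $\overbar{S}$ and finite generation is immediate. The ``conductor of $T$ into $\overbar{S}$ has height at least two'' is neither clearly stated nor needed anywhere — it plays no role in the paper and I don't see how it would shorten the argument. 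The identification $U_{T/R} = \iota_{T/S}^{-1}(U_{S/R}) \cap \operatorname{Max}T$ and the local agreement $T_{\mathfrak{t}} = S_{\mathfrak{t}\cap S}$ for $\mathfrak{t}\cap S \in U_{S/R}$ are indeed the substance of Theorem~\ref{depiction}, so that part of your plan is sound; you do, however, omit the surjectivity of $\iota_{T/R}$, which the paper gets by factoring through $\iota_{T/S}$ (integral extension, hence lying-over).

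The two genuine gaps are in your maximality and part~(2) arguments. For maximality, ``a local comparison inside $K(R)$'' is a gesture, not a proof. The paper's Proposition~\ref{contains} hinges on Lemma~\ref{lemma 1}: for $\mathfrak{q}' \in Z_{S'/R}$ there is a unique $\mathfrak{q} \in Z_{S/R}$ with $\mathfrak{q}' \cap R = \mathfrak{q} \cap R$ and $S'_{\mathfrak{q}'} = R_{\mathfrak{q}'\cap R} = R_{\mathfrak{q}\cap R} = S_{\mathfrak{q}}$. That bijection between the good loci of $S$ and $S'$ is what makes $\bigcap_{\mathfrak{q}'\in Z_{S'/R}} S'_{\mathfrak{q}'} = \bigcap_{\mathfrak{q}\in Z_{S/R}} S_{\mathfrak{q}} = T$ go through; you need to produce it, not assert the comparison. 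For part~(2), your claim that the minimal primes of $T$ over $\mathfrak{p}$ ``cut out the irreducible components of the smeared-out point'' is a restatement of the conclusion, not an argument. The paper first proves Proposition~\ref{surjective morphism}: for \emph{any} depiction $S'$, the map $\operatorname{Spec}T \to \operatorname{Spec}S'$ is surjective. This is nontrivial and is exactly where the codimension-one hypothesis enters decisively, via Lemma~\ref{t height 1} (height-one primes of $T$ lie in $Z_{T/R}$, because their contractions to $S$ have height one by incomparability for the integral extension $S \subseteq T$) and Krull's principal ideal theorem to locate a height-one prime when $\mathfrak{n}T = T$ is assumed. One then lifts a witnessing pair $(S', \mathfrak{q})$ realizing $\operatorname{ght}_R(\mathfrak{p})$ to a minimal $\mathfrak{t}$ over $\mathfrak{q}$ in $T$ and runs a chain argument, using density of $U_{T/R}$ and Lemma~\ref{lemma 0}.4 to show the contracted chain in $S'$ stays strict. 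None of this appears in your sketch, and without it the equality $\operatorname{gdim}\mathfrak{p} = \operatorname{dim}T/\mathfrak{t}$ is not established.
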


For example, consider the family of algebras
$$S_j := k[x,y,xz,yz,xz^2, yz^2, \ldots, xz^{j-1},yz^{j-1}, z^j] \ \ \ \text{ and } \ \ \ R := k + (x,y)S_1,$$
where $j \geq 1$, and $(x,y)S_1$ is the ideal of $S_1 = k[x,y,z]$ generated by $x$ and $y$.
Each $S_j$ is a depiction of $R$, and $R$ is noetherian in codimension $1$ (Example \ref{j geq 1}).
Since $S_1$ is normal, Theorem \ref{main} implies that $S_1$ is the unique maximal depiction of $R$.

Claim (2) in Theorem \ref{main} provides a means of computing the geometric dimension of a point of $\operatorname{Spec}R$ in the case $R$ is noetherian in codimension $1$. 
If a depiction has the property given in Claim (2), then we say it is \textit{saturated}.

In Section \ref{specialsection}, we consider the special case where $R$ has the form $R = k + I$, with $I$ a nonzero radical ideal of $S$ (and $R$ is not necessarily noetherian in codimension $1$).
Using Theorem \ref{main}, we show that if $\operatorname{dim}S/I \geq 1$, then $S$ is a saturated depiction of $R$ (Theorem \ref{wind}).

We conclude with a few examples of maximal depictions in Section \ref{examplesection}.
Notably, we show that if $R$ admits a unique maximal depiction $S$ but is not noetherian in codimension 1, then in general the geometric dimension of a point $\mathfrak{p} \in \operatorname{Spec}R$ need not equal the Krull dimension of $S/\mathfrak{q}$, for any $\mathfrak{q} \in \operatorname{Spec}S$ over $\mathfrak{p}$ (Example \ref{not 1}).

\section{Preliminary definitions} \label{terminology}

Let $S$ be an integral domain and a finitely generated $k$-algebra, and let $R$ be a (possibly nonnoetherian) subalgebra of $S$.
Denote by $\operatorname{Max}S$, $\operatorname{Spec}S$, and $\operatorname{dim}S$ the maximal spectrum (or variety), prime spectrum (or affine scheme), and Krull dimension of $S$ respectively; similarly for $R$. 
%
%
For a subset $I \subset S$, set $\mathcal{Z}_S(I) := \left\{ \mathfrak{n} \in \operatorname{Max}S \ | \ \mathfrak{n} \supseteq I \right\}$.

We will consider the following subsets of $\operatorname{Max}S$ and $\operatorname{Spec}S$, 
\begin{equation*} \label{noetherian locus}
\begin{aligned}
U_{S/R} := & \left\{ \mathfrak{n} \in \operatorname{Max}S \ | \ R_{\mathfrak{n}\cap R} = S_{\mathfrak{n}} \right\},\\
\tilde{U}_{S/R} := & \left\{ \mathfrak{q} \in \operatorname{Spec}S \ | \ R_{\mathfrak{q} \cap R} = S_{\mathfrak{q}} \right\},\\
Z_{S/R} := & \left\{ \mathfrak{q} \in \operatorname{Spec}S \ | \ \mathcal{Z}_S(\mathfrak{q}) \cap U_{S/R} \not = \emptyset \right\}.
\end{aligned}
\end{equation*}
Note that if $U_{S/R} \not = \emptyset$, then $R$ and $S$ have the same fraction field: if $\mathfrak{n} \in U_{S/R}$, then
\begin{equation} \label{goodbye}
\operatorname{Frac}R = \operatorname{Frac}(R_{\mathfrak{n} \cap R}) = \operatorname{Frac}(S_{\mathfrak{n}}) = \operatorname{Frac}S.
\end{equation}

\begin{Definition} \label{depiction def} \rm{\cite[Definition 3.1]{B3}
\begin{itemize}
 \item We say $S$ is a \textit{depiction} of $R$ if the morphism
$$\iota_{S/R}: \operatorname{Spec}S \rightarrow \operatorname{Spec}R, \ \ \ \ \mathfrak{q} \mapsto \mathfrak{q} \cap R,$$ 
is surjective, and
\begin{equation} \label{condition}
U_{S/R} = \left\{ \mathfrak{n} \in \operatorname{Max}S \ | \ R_{\mathfrak{n} \cap R} \text{ is noetherian} \right\} \not = \emptyset.
\end{equation}
 \item The \textit{geometric height} of $\mathfrak{p} \in \operatorname{Spec}R$ is the minimum
$$\operatorname{ght}(\mathfrak{p}) := \operatorname{min} \left\{ \operatorname{ht}_S(\mathfrak{q}) \ | \ \mathfrak{q} \in \iota^{-1}_{S/R}(\mathfrak{p}), \ S \text{ a depiction of } R \right\}.$$
The \textit{geometric dimension} of $\mathfrak{p}$ is
$$\operatorname{gdim} \mathfrak{p} := \operatorname{dim}R - \operatorname{ght}(\mathfrak{p}).$$
 \item A depiction $S$ of $R$ is \textit{maximal} if $S$ is not properly contained in another depiction of $R$. 
 \end{itemize}
} \end{Definition}

If $R$ is fixed, then we will often write $\iota_S$ for $\iota_{S/R}$.

\section{Proof of main theorem}

Throughout, let $S$ and $S'$ be depictions of $R$.
We begin by recalling the following useful facts from \cite{B3}.

\begin{Lemma} \label{lemma 0}
We have
\begin{enumerate}
 \item $\operatorname{dim}R = \operatorname{dim}S$.
 \item The locus $U_{S/R}$ is an open dense subset of $\operatorname{Max}S$.
 \item $Z_{S/R} \subseteq \tilde{U}_{S/R}$.
 \item If $\mathfrak{q} \in \tilde{U}_{S/R}$, then 
 $$\iota_{S}^{-1}\iota_{S}(\mathfrak{q}) = \left\{ \mathfrak{q} \right\}.$$
 \item The images of the loci $U_{S/R}$ and $U_{S'/R}$ in $\operatorname{Max}R$ coincide,
$$\iota_{S}(U_{S/R}) = \iota_{S'}(U_{S'/R}).$$
\end{enumerate}
\end{Lemma}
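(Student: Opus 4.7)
The plan is to prove each of the five claims by a localization argument, leveraging two structural facts from the depiction hypothesis: first, since $U_{S/R}\ne\emptyset$ and the equality $R_{\mathfrak n\cap R}=S_{\mathfrak n}$ inside $\operatorname{Frac}S$ forces $\operatorname{Frac}R=\operatorname{Frac}S$, the inclusion $R\subseteq S$ is a containment of subrings of a common fraction field; and second, since $S$ is a finitely generated $k$-domain, every maximal ideal of $S$ has height $\dim S$ and $S$ is catenary.

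For (1), I would pick any $\mathfrak n\in U_{S/R}$ and compute $\dim S=\operatorname{ht}_S\mathfrak n=\dim S_{\mathfrak n}=\dim R_{\mathfrak n\cap R}\le\dim R$; the reverse inequality follows from lifting chains of primes in $\operatorname{Spec}R$ to $\operatorname{Spec}S$, using surjectivity of $\iota_S$ together with incomparability in the common fraction field. For (2), writing $S=R[a_1/b_1,\ldots,a_m/b_m]$ with $a_i,b_i\in R$, the basic open $\operatorname{Max}S\setminus\mathcal Z(b_1\cdots b_m)$ sits inside $U_{S/R}$, so $U_{S/R}$ is open, and density then follows from irreducibility of $\operatorname{Max}S$.

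Claim (3) reduces to a chase of prime ideals inside the single ring $A:=R_{\mathfrak n\cap R}=S_{\mathfrak n}$, for a maximal $\mathfrak n\in U_{S/R}$ containing $\mathfrak q$: the prime $\mathfrak qA$ of $A$ contracts to $(\mathfrak q\cap R)A$ through $R\to A$, so the two coincide as primes of $A$, and localizing $A$ at this prime yields both $S_{\mathfrak q}$ and $R_{\mathfrak q\cap R}$. For (4), with $\mathfrak p:=\mathfrak q\cap R=\mathfrak q'\cap R$, the inclusion $R\setminus\mathfrak p\subseteq S\setminus\mathfrak q'$ gives a local ring embedding $R_{\mathfrak p}=S_{\mathfrak q}\hookrightarrow S_{\mathfrak q'}$; any $s\in S\setminus\mathfrak q$ is a unit in $S_{\mathfrak q}$ and hence in $S_{\mathfrak q'}$ and so cannot lie in $\mathfrak q'$, while any $s\in\mathfrak q$ satisfies $us\in\mathfrak p\subseteq\mathfrak q'$ for some $u\in R\setminus\mathfrak p$, forcing $s\in\mathfrak q'$. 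Together these force $\mathfrak q=\mathfrak q'$.

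Claim (5) is the main obstacle. The plan is to characterize $\iota_S(U_{S/R})\subseteq\operatorname{Spec}R$ intrinsically, independently of the choice of depiction $S$: a natural candidate is the set of $\mathfrak p\in\operatorname{Spec}R$ for which $R_{\mathfrak p}$ is a noetherian local ring of dimension $\dim R$ that arises as the localization of some finitely generated overring at a maximal ideal. Given $\mathfrak p\in\iota_S(U_{S/R})$, I would use surjectivity of $\iota_{S'}$ to produce $\mathfrak q'\in\operatorname{Spec}S'$ over $\mathfrak p$, then combine (4) and (1) to force $\mathfrak q'$ to be maximal in $S'$ with $R_{\mathfrak p}=S'_{\mathfrak q'}$. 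The delicate step is guaranteeing that $\mathfrak q'$ really is maximal in $S'$, which appears to require controlling $\operatorname{ht}_{S'}\mathfrak q'$ via the equality $\dim R_{\mathfrak p}=\dim R=\dim S'$, and this is the place where I expect the bookkeeping to be most subtle.
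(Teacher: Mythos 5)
The paper does not give an internal proof of this lemma; it simply cites five results from \cite{B3}, so there is no proof to compare against line by line. Evaluated on its own terms, your sketch is correct and self-contained for parts (3) and (4), but there are genuine gaps in (1), (2), and (5).

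For (1), the inequality $\dim S\le\dim R$ via $\dim S=\operatorname{ht}_S\mathfrak n=\dim S_{\mathfrak n}=\dim R_{\mathfrak n\cap R}\le\dim R$ is fine, but the reverse direction as you describe it does not go through: surjectivity of $\iota_S$ only lifts individual primes, not chains, and ``incomparability'' of fibres is a property of integral extensions (or flat ones with extra hypotheses), neither of which is given here. You would need a different argument for $\dim R\le\dim S$. For (2), showing that the fixed basic open $\operatorname{Max}S\setminus\mathcal Z(b_1\cdots b_m)$ lies in $U_{S/R}$ gives density (by irreducibility of $\operatorname{Max}S$) but not openness; containing an open set does not make a set open. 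The fix is to produce, for \emph{each} $\mathfrak n\in U_{S/R}$, a basic open neighbourhood inside $U_{S/R}$: since $S_{\mathfrak n}=R_{\mathfrak n\cap R}$, the $k$-algebra generators $s_1,\dots,s_m$ of $S$ can be written $s_i=a_i/b_i$ with $a_i,b_i\in R$ and $b_i\notin\mathfrak n$ (depending on $\mathfrak n$), and then $\operatorname{Max}S\setminus\mathcal Z(b_1\cdots b_m)$ is the desired neighbourhood.

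For (5), your plan can be completed, but the ``delicate step'' you flag has a much more direct resolution than controlling $\operatorname{ht}_{S'}\mathfrak q'$. The key observation is that any $\mathfrak p\in\iota_S(U_{S/R})$ is automatically \emph{maximal} in $R$: writing $\mathfrak p=\mathfrak n\cap R$ with $\mathfrak n\in U_{S/R}$, one has $R/\mathfrak p\hookrightarrow R_{\mathfrak p}/\mathfrak pR_{\mathfrak p}=S_{\mathfrak n}/\mathfrak nS_{\mathfrak n}=k$ by the Nullstellensatz, so $R/\mathfrak p=k$. Hence any $\mathfrak q'\in\operatorname{Spec}S'$ over $\mathfrak p$ is contained in a maximal ideal $\mathfrak m'$ of $S'$ which, by maximality of $\mathfrak p$, also lies over $\mathfrak p$. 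Since $R_{\mathfrak m'\cap R}=R_{\mathfrak p}=S_{\mathfrak n}$ is noetherian, the defining property of a depiction gives $\mathfrak m'\in U_{S'/R}$ directly, with no height bookkeeping required. Symmetry then yields the equality of images. Without the maximality of $\mathfrak p$, I do not see how your height argument alone forces the lifted prime to be maximal, so this is the missing idea.
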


\begin{proof}
The claims are respectively \cite[Theorem 2.5.4; Proposition 2.4.2; Lemma 2.2; Theorem 2.5.1; Theorem 3.5]{B3}.
\end{proof}

\begin{Lemma} \label{lemma 1}
If $\mathfrak{q} \in Z_{S/R}$, then there is a unique prime $\mathfrak{q}' \in \operatorname{Spec}S'$ such that
\begin{equation*}
\mathfrak{q}' \cap R = \mathfrak{q} \cap R.
\end{equation*}
Moreover,
\begin{equation*}
\mathfrak{q}' \in Z_{S'/R} \ \ \ \ \text{ and } \ \ \ \ \operatorname{ht}_{S'}(\mathfrak{q}') = \operatorname{ht}_{S}(\mathfrak{q}).
\end{equation*}
\end{Lemma}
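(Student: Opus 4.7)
My plan is to transfer the prime $\mathfrak{q}$ across depictions by working inside a common localization, then to get uniqueness and equality of heights essentially for free from Lemma \ref{lemma 0}.

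First, using $\mathfrak{q} \in Z_{S/R}$, pick $\mathfrak{n} \in \mathcal{Z}(\mathfrak{q}) \cap U_{S/R}$. By Lemma \ref{lemma 0}.5 applied to $\iota_S(\mathfrak{n}) \in \iota_S(U_{S/R}) = \iota_{S'}(U_{S'/R})$, there exists $\mathfrak{n}' \in U_{S'/R}$ with $\mathfrak{n}' \cap R = \mathfrak{n} \cap R =: \mathfrak{p}_0$. Since $\mathfrak{n} \in U_{S/R}$ and $\mathfrak{n}' \in U_{S'/R}$, both $S_{\mathfrak{n}}$ and $S'_{\mathfrak{n}'}$ coincide with $R_{\mathfrak{p}_0}$ inside the common fraction field; write $L := S_{\mathfrak{n}} = R_{\mathfrak{p}_0} = S'_{\mathfrak{n}'}$.

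Next, since $\mathfrak{q} \subseteq \mathfrak{n}$, the ideal $\mathfrak{Q} := \mathfrak{q}L$ is a prime of $L$, and I define
$$\mathfrak{q}' := \mathfrak{Q} \cap S' \in \operatorname{Spec} S'.$$
Because $\mathfrak{Q} \subseteq \mathfrak{n}' L$, we have $\mathfrak{q}' \subseteq \mathfrak{n}'$, so $\mathfrak{n}' \in \mathcal{Z}(\mathfrak{q}') \cap U_{S'/R}$ and hence $\mathfrak{q}' \in Z_{S'/R}$. To verify $\mathfrak{q}' \cap R = \mathfrak{q} \cap R$, I would just chase a general element $x = q/s \in \mathfrak{Q} \cap R$ (with $q \in \mathfrak{q}$, $s \in S\setminus\mathfrak{n}$) and use that $s \notin \mathfrak{q}$ to conclude $x \in \mathfrak{q}\cap R$; the reverse containment is immediate. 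Thus $\mathfrak{q}' \cap R = \mathfrak{Q} \cap R = \mathfrak{q} \cap R$.

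For uniqueness, since $\mathfrak{q}' \in Z_{S'/R} \subseteq \tilde{U}_{S'/R}$ by Lemma \ref{lemma 0}.3, Lemma \ref{lemma 0}.4 gives $\iota_{S'}^{-1}(\iota_{S'}(\mathfrak{q}')) = \{\mathfrak{q}'\}$, so any other prime of $S'$ lying over $\mathfrak{q}\cap R$ must equal $\mathfrak{q}'$. For the heights, I use that localization preserves the height of primes contained in the localizing prime: symmetrically,
$$\operatorname{ht}_{S}(\mathfrak{q}) = \operatorname{ht}_{L}(\mathfrak{q}L) = \operatorname{ht}_{L}(\mathfrak{Q}) = \operatorname{ht}_{L}(\mathfrak{q}'L) = \operatorname{ht}_{S'}(\mathfrak{q}').$$

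The only subtle step is setting up the common localization $L$; once that identification is in place the rest is bookkeeping. The main conceptual obstacle is simply recognizing that the lemma is really a statement about what happens inside $R_{\mathfrak{p}_0}$, and that Lemma \ref{lemma 0}.5 is exactly what guarantees a matching maximal ideal $\mathfrak{n}'$ on the other depiction to produce this localization.
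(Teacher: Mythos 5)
Your argument is essentially the paper's proof: both pick $\mathfrak{n}\in\mathcal{Z}(\mathfrak{q})\cap U_{S/R}$, use Lemma \ref{lemma 0}.5 to find $\mathfrak{n}'\in U_{S'/R}$ with $S_{\mathfrak{n}}=R_{\mathfrak{n}\cap R}=S'_{\mathfrak{n}'}$, define $\mathfrak{q}':=\mathfrak{q}S'_{\mathfrak{n}'}\cap S'$, and invoke Lemmas \ref{lemma 0}.3--4 for uniqueness. The only cosmetic difference is the height step, where you track heights through the common localization $L$ directly rather than routing through $\dim R_{\mathfrak{q}\cap R}$ via Lemma \ref{lemma 0}.3 as the paper does, but these are equivalent one-line computations.
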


\begin{proof}
Suppose the hypotheses hold.
Since $\mathcal{Z}_S(\mathfrak{q}) \cap U_{S/R} \not = \emptyset$, there is some $\mathfrak{n} \in U_{S/R}$ for which $\mathfrak{n} \supseteq \mathfrak{q}$. 
Whence
$$\iota_{S}(\mathfrak{n}) \in \iota_{S}(U_{S/R}) \stackrel{\textsc{(i)}}{=} \iota_{S'}(U_{S'/R}),$$
where (\textsc{i}) holds by Lemma \ref{lemma 0}.5.
Thus there is some $\mathfrak{n}' \in U_{S'/R}$ for which
$$\mathfrak{n}' \in \iota_{S'}^{-1}\iota_{S} (\mathfrak{n}).$$
In particular, $\mathfrak{n}' \cap R = \mathfrak{n} \cap R$ and
$$S'_{\mathfrak{n}'} = R_{\mathfrak{n}'\cap R} = R_{\mathfrak{n} \cap R} = S_{\mathfrak{n}}.$$

Now $\mathfrak{q}S_{\mathfrak{n}}$ is a prime ideal of $S_{\mathfrak{n}}$ since $\mathfrak{q}$ is a prime ideal of $S$.
Thus
$$\mathfrak{q}S'_{\mathfrak{n}'} = \mathfrak{q}S_{\mathfrak{n}}$$
is a prime ideal of $S'_{\mathfrak{n}'} = S_{\mathfrak{n}}$.
Whence the intersection
$$\mathfrak{q}' := \mathfrak{q} S'_{\mathfrak{n}'} \cap S'$$
is a prime ideal of $S'$ contained in $\mathfrak{n}'$.
Therefore 
\begin{equation} \label{Zq'}
\mathcal{Z}_{S'}(\mathfrak{q}') \cap U_{S'/R} \not = \emptyset,
\end{equation}
that is, $\mathfrak{q}' \in Z_{S'/R}$. 
Furthermore,
$$\mathfrak{q}' \cap R = \mathfrak{q}S'_{\mathfrak{n}'} \cap S' \cap R = \mathfrak{q}S'_{\mathfrak{n}'} \cap S \cap R = \mathfrak{q}S_{\mathfrak{n}} \cap S \cap R = \mathfrak{q} \cap R.$$
Therefore $\mathfrak{q}' \cap R = \mathfrak{q} \cap R$.
Uniqueness of $\mathfrak{q}' \in \operatorname{Spec}S'$ follows from (\ref{Zq'}) and Lemmas \ref{lemma 0}.3 and \ref{lemma 0}.4.

Finally, the heights of $\mathfrak{q}$ and $\mathfrak{q}'$ coincide:
$$\operatorname{ht}_{S'}(\mathfrak{q}') = \operatorname{dim}S'_{\mathfrak{q}'} \stackrel{\textsc{(i)}}{=} \operatorname{dim}R_{\mathfrak{q}' \cap R} = \operatorname{dim}R_{\mathfrak{q}\cap R} \stackrel{\textsc{(ii)}}{=} \operatorname{dim}S_{\mathfrak{q}} = \operatorname{ht}_{S}(\mathfrak{q}),$$
where (\textsc{i}) and (\textsc{ii}) hold by Lemma \ref{lemma 0}.3.
\end{proof}

\begin{Proposition} \label{lemma 1.1}
If $\mathfrak{q} \in Z_{S/R}$, then 
$$\operatorname{ght}_R(\mathfrak{q} \cap R) = \operatorname{ht}_S(\mathfrak{q}).$$
\end{Proposition}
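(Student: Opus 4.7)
The plan is to observe that the conclusion follows almost immediately from Lemma \ref{lemma 1}, together with the uniqueness statement in Lemma \ref{lemma 0}.4. Set $\mathfrak{p} := \mathfrak{q} \cap R$. By the definition of geometric height,
$$\operatorname{ght}(\mathfrak{p}) = \min \left\{ \operatorname{ht}_{S'}(\mathfrak{q}') \ \middle| \ \mathfrak{q}' \in \iota_{S'/R}^{-1}(\mathfrak{p}), \ S' \text{ a depiction of } R \right\}.$$
Taking $S' = S$ and $\mathfrak{q}' = \mathfrak{q}$ shows that this minimum is at most $\operatorname{ht}_S(\mathfrak{q})$, so the content lies in the opposite inequality.

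For the lower bound, I would fix an arbitrary depiction $S'$ of $R$ and an arbitrary prime $\mathfrak{q}' \in \iota_{S'/R}^{-1}(\mathfrak{p})$. Applying Lemma \ref{lemma 1} to $\mathfrak{q} \in Z_{S/R}$ produces a prime $\mathfrak{q}'' \in Z_{S'/R}$ with $\mathfrak{q}'' \cap R = \mathfrak{p}$ and $\operatorname{ht}_{S'}(\mathfrak{q}'') = \operatorname{ht}_S(\mathfrak{q})$. By Lemma \ref{lemma 0}.3, $\mathfrak{q}'' \in \tilde{U}_{S'/R}$, and so Lemma \ref{lemma 0}.4 gives
$$\iota_{S'}^{-1}\iota_{S'}(\mathfrak{q}'') = \{ \mathfrak{q}'' \}.$$
Since $\mathfrak{q}'$ is also a preimage of $\mathfrak{p} = \iota_{S'}(\mathfrak{q}'')$, this forces $\mathfrak{q}' = \mathfrak{q}''$, and hence $\operatorname{ht}_{S'}(\mathfrak{q}') = \operatorname{ht}_S(\mathfrak{q})$. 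Because this holds for every depiction $S'$ and every preimage $\mathfrak{q}'$, the minimum in the definition of $\operatorname{ght}(\mathfrak{p})$ is exactly $\operatorname{ht}_S(\mathfrak{q})$.

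I do not expect any serious obstacle: the heavy lifting has already been done in Lemma \ref{lemma 1} (which supplies both the existence of a height-preserving lift to any depiction and the $Z_{S'/R}$-membership needed to invoke Lemma \ref{lemma 0}.4). The only subtle point worth emphasizing in the write-up is that Lemma \ref{lemma 0}.4 turns the uniqueness \emph{inside} $Z_{S'/R}$ into uniqueness \emph{inside all of} $\operatorname{Spec}S'$, which is what is required to conclude that the fibre $\iota_{S'}^{-1}(\mathfrak{p})$ is a singleton with height $\operatorname{ht}_S(\mathfrak{q})$.
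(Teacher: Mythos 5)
Your argument is correct and matches the paper's approach — the paper's proof is simply ``Follows from Lemma \ref{lemma 1},'' and your writeup unpacks exactly why: Lemma \ref{lemma 1} forces every fibre $\iota_{S'/R}^{-1}(\mathfrak{p})$ over any depiction $S'$ to be a singleton of height $\operatorname{ht}_S(\mathfrak{q})$, so the minimum defining $\operatorname{ght}(\mathfrak{p})$ is attained at that common value. One small note: the uniqueness in Lemma \ref{lemma 1} is already stated for all of $\operatorname{Spec}S'$, not merely within $Z_{S'/R}$, so the step you flag as the subtle point (re-invoking Lemma \ref{lemma 0}.3 and \ref{lemma 0}.4 to upgrade the uniqueness) is already built into Lemma \ref{lemma 1} and could simply be cited.
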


\begin{proof}
Follows from Lemma \ref{lemma 1}.
\end{proof}

Denote by $T_{S/R}$ the global sections ring on $U_{S/R}$,
$$T_{S/R} := \Gamma(U_{S/R}) = \bigcap_{\mathfrak{n} \in U_{S/R}} S_{\mathfrak{n}}.$$

\begin{Proposition} \label{contains}
The global sections ring $T_{S/R}$ satisfies
$$T_{S/R} := \bigcap_{\mathfrak{n} \in U_{S/R}} S_{\mathfrak{n}} = \bigcap_{\mathfrak{q} \in Z_{S/R}} S_{\mathfrak{q}},$$ 
and contains each depiction of $R$.
\end{Proposition}

\begin{proof}
Given depictions $S$, $S'$ of $R$, we have
$$S' \subseteq \bigcap _{\mathfrak{q}' \in Z_{S'/R}} S'_{\mathfrak{q}'} 
\stackrel{\textsc{(i)}}{=} \bigcap_{\mathfrak{q}' \in Z_{S'/R}} R_{\mathfrak{q}' \cap R}
\stackrel{\textsc{(ii)}}{=} \bigcap_{\mathfrak{q} \in Z_{S/R}} R_{\mathfrak{q} \cap R}
\stackrel{\textsc{(iii)}}{=} \bigcap_{\mathfrak{q} \in Z_{S/R}} S_{\mathfrak{q}} \stackrel{\textsc{(iv)}}{=} \bigcap_{\mathfrak{n} \in U_{S/R}} S_{\mathfrak{n}} = T_{S/R}.$$
Indeed, (\textsc{i}) and (\textsc{iii}) hold by Lemma \ref{lemma 0}.3, and (\textsc{ii}) holds by Lemma \ref{lemma 1}.
(\textsc{iv}) holds since if $\mathfrak{q} \in Z_{S/R}$, then there is some $\mathfrak{n} \in U_{S/R}$ such that $\mathfrak{n} \supseteq \mathfrak{q}$; in particular, $S_{\mathfrak{n}} \subseteq S_{\mathfrak{q}}$.
\end{proof}

Denote by $D_S$ the set of height $1$ prime ideals of $S$,
$$D_S := \left\{ \mathfrak{q} \in \operatorname{Spec}S \ | \ \operatorname{ht}(\mathfrak{q}) = 1 \right\}.$$
Note that, by definition, $R$ is noetherian in codimension $1$ if $R$ admits a depiction $S$ for which $D_S \subseteq Z_{S/R}$.

\textit{For the remainder of this section, we will assume that $R$ is noetherian in codimension 1 unless stated otherwise.} 

\begin{Lemma} \label{contained in normalization}
Suppose $D_S \subseteq Z_{S/R}$.
Then $T_{S/R}$ is contained in the normalization $\overbar{S}$ of $S$.
In particular, $T_{S/R}$ is an integral extension of $S$.
\end{Lemma}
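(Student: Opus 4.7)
The plan is to reduce to the intersection formula $\overbar{S} = \bigcap_{\mathfrak{P}} \overbar{S}_{\mathfrak{P}}$, with $\mathfrak{P}$ ranging over the height-$1$ primes of $\overbar{S}$. This holds because $\overbar{S}$ is a normal noetherian domain: the normalization of a finitely generated $k$-algebra is module-finite (since such algebras are Nagata), hence again a finitely generated $k$-algebra. Thus it suffices to show $T \subseteq \overbar{S}_{\mathfrak{P}}$ for each height-$1$ prime $\mathfrak{P}$ of $\overbar{S}$.

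Fix such a $\mathfrak{P}$ and set $\mathfrak{q} := \mathfrak{P} \cap S$. The crucial point is to verify that $\operatorname{ht}_S(\mathfrak{q}) = 1$. This follows from the dimension formula $\operatorname{ht}(\mathfrak{a}) + \operatorname{dim}(A/\mathfrak{a}) = \operatorname{dim}A$ for primes $\mathfrak{a}$ of a finitely generated $k$-algebra domain $A$: applied to $\mathfrak{P}$ in $\overbar{S}$ and to $\mathfrak{q}$ in $S$, combined with $\operatorname{dim}S = \operatorname{dim}\overbar{S}$ and with $\operatorname{dim}(\overbar{S}/\mathfrak{P}) = \operatorname{dim}(S/\mathfrak{q})$ (because $\overbar{S}/\mathfrak{P}$ is module-finite, hence integral, over $S/\mathfrak{q}$), this forces $\operatorname{ht}_S(\mathfrak{q}) = \operatorname{ht}_{\overbar{S}}(\mathfrak{P}) = 1$.

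Once $\operatorname{ht}_S(\mathfrak{q}) = 1$, the noetherian-in-codimension-$1$ hypothesis, applied to the chosen depiction $S$, yields a maximal ideal $\mathfrak{n} \in U_{S/R}$ with $\mathfrak{n} \supseteq \mathfrak{q}$. The containments $S \setminus \mathfrak{n} \subseteq S \setminus \mathfrak{q} \subseteq \overbar{S} \setminus \mathfrak{P}$ then give
$$T \subseteq S_{\mathfrak{n}} \subseteq S_{\mathfrak{q}} \subseteq \overbar{S}_{\mathfrak{P}},$$
and intersecting over all such $\mathfrak{P}$ yields $T \subseteq \overbar{S}$. The \emph{in particular} clause is immediate since $\overbar{S}$ is integral over $S$ by the definition of the normalization.

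I expect the main obstacle to be the height-preservation step $\operatorname{ht}_S(\mathfrak{q}) = \operatorname{ht}_{\overbar{S}}(\mathfrak{P})$; everything else --- the Krull-type intersection formula for the normal noetherian domain $\overbar{S}$, and the bookkeeping with multiplicative sets --- is routine.
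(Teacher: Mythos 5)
Your proof is correct, but it takes a noticeably different route from the paper. The paper cites Matsumura (Theorem 11.5.ii) to express $\overbar{S}$ as an intersection indexed by the \emph{height-$1$ primes of $S$ itself}, so that each localization $S_{\mathfrak{q}}$ in that intersection is immediately amenable to the noetherian-in-codimension-$1$ hypothesis: $\mathfrak{q} \in Z_{S/R}$ gives $T \subseteq S_{\mathfrak{q}}$ with no further work. You instead use the Krull-domain intersection $\overbar{S} = \bigcap_{\mathfrak{P}} \overbar{S}_{\mathfrak{P}}$ indexed by height-$1$ primes of $\overbar{S}$, which forces you to pull each $\mathfrak{P}$ back to $\mathfrak{q} = \mathfrak{P} \cap S$ and verify $\operatorname{ht}_S(\mathfrak{q}) = 1$. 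That height-preservation step is the genuine extra content of your argument, and your handling of it is sound: $\overbar{S}$ is module-finite over $S$ (hence an affine $k$-domain), so the catenary dimension formula applies to both rings, and the two integral extensions $S \subseteq \overbar{S}$ and $S/\mathfrak{q} \subseteq \overbar{S}/\mathfrak{P}$ preserve Krull dimension, forcing equal heights. After that, the chain $T \subseteq S_{\mathfrak{n}} \subseteq S_{\mathfrak{q}} \subseteq \overbar{S}_{\mathfrak{P}}$ and intersection over $\mathfrak{P}$ finish the job. In short, the paper's citation short-circuits the height comparison, while your version is more self-contained and leans only on the very standard Krull-domain intersection property of a normal noetherian domain at the cost of one extra (correctly executed) lemma.
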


\begin{proof} 
Since $S$ is a noetherian domain, its normalization $\overbar{S}$ is given by \cite[Theorem 11.5.ii]{M} 
\begin{equation} \label{overbar S}
\overbar{S} = \bigcap_{\mathfrak{q} \in D_S} S_{\mathfrak{q}}.
\end{equation}
But $D_S \subseteq Z_{S/R}$ by assumption.
Therefore $T_{S/R} \subseteq \overbar{S}$, by Proposition \ref{contains}. 
\end{proof}

\begin{Proposition} \label{fg}
Suppose $D_S \subseteq Z_{S/R}$.
Then $T_{S/R}$ is finitely generated as an $S$-module and as a $k$-algebra.
\end{Proposition}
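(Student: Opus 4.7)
The plan is to leverage the fact that Lemma \ref{contained in normalization} has already sandwiched $T$ between $S$ and its normalization, $S \subseteq T \subseteq \overbar{S}$, and then to invoke the classical finiteness of normalization for an affine domain. Since $S$ is a finitely generated $k$-algebra that is an integral domain, with $k$ algebraically closed (in particular perfect), $\overbar{S}$ is finitely generated as an $S$-module; this is a standard result for affine domains over a perfect field, following from Noether normalization together with the finiteness of integral closure of a Dedekind domain in a finite separable extension.

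From here the $S$-module claim follows almost immediately. Because $S$ is Noetherian (being a finitely generated $k$-algebra) and $\overbar{S}$ is a finitely generated $S$-module, $\overbar{S}$ is a Noetherian $S$-module. Every $S$-submodule of a Noetherian $S$-module is finitely generated, and $T$ is such a submodule by Lemma \ref{contained in normalization}. Hence $T$ is finitely generated as an $S$-module.

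To upgrade to finite generation as a $k$-algebra, I would simply combine algebra generators of $S$ with module generators of $T$: writing $S = k[s_1, \ldots, s_m]$ and $T = \sum_{i=1}^{r} St_i$, one reads off $T = k[s_1, \ldots, s_m, t_1, \ldots, t_r]$. The only real ingredient beyond Lemma \ref{contained in normalization} is the finiteness of the normalization of an affine domain, so there is no serious obstacle in this proof; the argument is essentially a formal consequence of containment in $\overbar{S}$.
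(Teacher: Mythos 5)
Your argument is correct and follows essentially the same path as the paper: apply Lemma \ref{contained in normalization} to sandwich $T$ inside $\overbar{S}$, invoke finiteness of the normalization of an affine domain, and then use that $S$ is Noetherian to conclude $T$ is a finitely generated $S$-module, hence a finitely generated $k$-algebra. The only difference is that you spell out the final step (concatenating algebra generators of $S$ with module generators of $T$), which the paper leaves implicit.
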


\begin{proof}
Set $T := T_{S/R}$.
$S$ is a finitely generated $k$-algebra since $S$ is a depiction of $R$.
Thus its normalization $\overbar{S}$ is a finitely generated $S$-module by the Noether normalization lemma \cite[Corollary 13.13]{E}.
Furthermore, $T$ is a submodule of $\overbar{S}$ by Lemma \ref{contained in normalization}.
Thus $T$ is a finitely generated $S$-module since $S$ is noetherian. 
Therefore $T$ is a finitely generated $k$-algebra.
\end{proof}

\begin{Lemma} \label{surjective morphism1} 
Suppose $D_S \subseteq Z_{S/R}$, and set $T := T_{S/R}$.
The morphism 
$$\iota_{T/S}: \operatorname{Spec}T \to \operatorname{Spec}S, \ \ \ \ \mathfrak{t} \mapsto \mathfrak{t} \cap S,$$
is surjective.
\end{Lemma}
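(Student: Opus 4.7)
My plan is to invoke the Cohen-Seidenberg lying-over theorem. Lemma \ref{contained in normalization} gives the inclusion $S \subseteq T \subseteq \overbar{S}$ and states explicitly that $T$ is an integral extension of $S$. Given any $\mathfrak{q} \in \operatorname{Spec} S$, the lying-over theorem (see, e.g., \cite[Theorem 9.3]{M}) then produces a prime $\mathfrak{t} \in \operatorname{Spec} T$ with $\mathfrak{t} \cap S = \mathfrak{q}$, which is precisely the assertion that $\iota_{T/S}$ is surjective.

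I do not anticipate any real obstacle: all of the content of the lemma has been packaged into Lemma \ref{contained in normalization}, and what remains is essentially a one-line citation of a standard fact in commutative algebra. As a sanity check, one can alternatively argue via the factorization
$$\operatorname{Spec}\overbar{S} \xrightarrow{\ \pi\ } \operatorname{Spec} T \xrightarrow{\iota_{T/S}} \operatorname{Spec} S,$$
where $\pi$ is induced by $T \hookrightarrow \overbar{S}$. Lying-over applied to the integral extension $S \subseteq \overbar{S}$ makes the composition $\iota_{T/S} \circ \pi$ surjective, and surjectivity of a composition forces the outer map $\iota_{T/S}$ to be surjective. Either route delivers the conclusion immediately from Lemma \ref{contained in normalization}.
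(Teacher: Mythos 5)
Your proof is correct and is essentially identical to the paper's: both deduce surjectivity of $\iota_{T/S}$ from the integrality of $T$ over $S$ (Lemma \ref{contained in normalization}) via the Cohen--Seidenberg lying-over theorem, citing the same reference \cite[Theorem 9.3]{M}. The alternative factorization argument you sketch is a fine sanity check but adds nothing beyond the direct citation.
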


\begin{proof}
$T$ is an integral extension of $S$ by Lemma \ref{contained in normalization}, and therefore $\iota_{T/S}$ is surjective \cite[Theorem 9.3.i]{M}. 
\end{proof}

\begin{Theorem} \label{depiction}
Suppose $D_S \subseteq Z_{S/R}$.
Then $T_{S/R}$ is a depiction of $R$.
\end{Theorem}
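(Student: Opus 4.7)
The plan is to verify the two requirements of Definition \ref{depiction def}: surjectivity of $\iota_{T/R}$, and $U_{T/R}$ being nonempty and coinciding with the locus of maximal ideals $\mathfrak{m}$ of $T$ at which $R_{\mathfrak{m}\cap R}$ is noetherian. The preliminaries are already in place: $T$ is a finitely generated $k$-algebra by Proposition \ref{fg}, and it is a domain as a subring of $\operatorname{Frac}S = \operatorname{Frac}R$.

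Surjectivity of $\iota_{T/R}$ I would dispatch first by writing $\iota_{T/R} = \iota_{S/R} \circ \iota_{T/S}$: $\iota_{T/S}$ is surjective by Lemma \ref{surjective morphism1}, and $\iota_{S/R}$ is surjective because $S$ is a depiction of $R$.

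The core of the argument is a local identification that will produce maximal ideals in $U_{T/R}$ and also match the noetherian locus. Fix $\mathfrak{n} \in U_{S/R}$ and put $\mathfrak{p} := \mathfrak{n} \cap R$, so $R_{\mathfrak{p}} = S_{\mathfrak{n}}$. I would show
$$T_{R \setminus \mathfrak{p}} = S_{\mathfrak{n}}$$
by a sandwich: $R \subseteq T$ gives $R_{\mathfrak{p}} \subseteq T_{R \setminus \mathfrak{p}}$, while $T \subseteq S_{\mathfrak{n}}$ holds by the very definition of $T$, and $R \setminus \mathfrak{p}$ is already invertible in $R_{\mathfrak{p}} = S_{\mathfrak{n}}$, so $T_{R \setminus \mathfrak{p}} \subseteq S_{\mathfrak{n}}$. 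Because $S_{\mathfrak{n}}$ is local, $T$ has a unique prime maximal among those contracting into $\mathfrak{p}$, namely $\mathfrak{m} := \mathfrak{n}S_{\mathfrak{n}} \cap T$, with $T_{\mathfrak{m}} = S_{\mathfrak{n}} = R_{\mathfrak{p}}$. Since $T$ is integral over $S$ by Lemma \ref{contained in normalization} and $\mathfrak{m} \cap S = \mathfrak{n}$ is maximal, $\mathfrak{m}$ is itself maximal in $T$; and any $\mathfrak{m}' \in \operatorname{Max}T$ with $\mathfrak{m}' \cap R = \mathfrak{p}$ is forced to equal $\mathfrak{m}$. Hence $\mathfrak{m} \in U_{T/R}$, so $U_{T/R} \neq \emptyset$.

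It then remains to identify $U_{T/R}$ with the noetherian locus. One direction is immediate: if $\mathfrak{m} \in U_{T/R}$ then $R_{\mathfrak{m}\cap R} = T_{\mathfrak{m}}$ is noetherian because $T$ is. Conversely, given $\mathfrak{m}' \in \operatorname{Max}T$ with $R_{\mathfrak{m}'\cap R}$ noetherian, set $\mathfrak{n}' := \mathfrak{m}' \cap S$, which is maximal in $S$ by integrality; then $R_{\mathfrak{n}'\cap R}$ is noetherian, so $\mathfrak{n}' \in U_{S/R}$ by the depiction hypothesis on $S$, and the uniqueness established above places $\mathfrak{m}' \in U_{T/R}$. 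The main obstacle I foresee is precisely this uniqueness of the maximal ideal of $T$ over $\mathfrak{p}$ — without it, distinct primes in the same fibre could yield distinct localizations and break the identification with the noetherian locus — and the key trick for handling it is the local-ring identity $T_{R \setminus \mathfrak{p}} = S_{\mathfrak{n}}$.
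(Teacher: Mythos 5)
Your proposal is correct, and while the overall skeleton (surjectivity via the factorization $\iota_{T/R} = \iota_{S/R}\circ\iota_{T/S}$, then identifying $U_{T/R}$ with the noetherian locus and showing it is nonempty) matches the paper, the mechanism behind the central local identification differs in a useful way. The paper fixes $\mathfrak{t}\in\operatorname{Max}T$ with $R_{\mathfrak{t}\cap R}$ noetherian, sets $\mathfrak{n}=\mathfrak{t}\cap S$, and proves $T_{\mathfrak{t}}=S_{\mathfrak{n}}$ directly by a two-sided inclusion: $S_{\mathfrak{t}\cap S}\subseteq T_{\mathfrak{t}}$ on one side, and on the other a chain that localizes the intersection $T=\bigcap_{\mathfrak{n}'}S_{\mathfrak{n}'}$ at $\mathfrak{t}$ and passes it inside the intersection to land back in $S_{\mathfrak{t}\cap S}$. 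You instead localize $T$ at the multiplicative set $R\setminus\mathfrak{p}$ (not at $T\setminus\mathfrak{t}$), squeeze $R_{\mathfrak{p}}\subseteq T_{R\setminus\mathfrak{p}}\subseteq S_{\mathfrak{n}}=R_{\mathfrak{p}}$, and then exploit the locality of $T_{R\setminus\mathfrak{p}}$ to read off that the fiber of $\operatorname{Max}T$ over $\mathfrak{p}$ is a singleton $\{\mathfrak{m}\}$ with $T_{\mathfrak{m}}=T_{R\setminus\mathfrak{p}}=S_{\mathfrak{n}}$. What your route buys is that you never need to manipulate a localization of the possibly awkward intersection defining $T$; the sandwich is purely in terms of subrings of $\operatorname{Frac}S$, and the uniqueness of the fiber point falls out for free from the local-ring structure rather than being needed from Lemmas~\ref{lemma 0}.3--4. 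What the paper's route buys is that it works for an arbitrary $\mathfrak{t}$ lying over $\mathfrak{n}$ without first constructing or isolating a distinguished one, so it is shorter on bookkeeping; the two approaches draw on the same supporting results (Proposition~\ref{fg}, Lemma~\ref{contained in normalization}, Lemma~\ref{surjective morphism1}) and are of comparable strength.
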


\begin{proof}
Set $T := T_{S/R}$.

(i) $T$ is a finitely generated $k$-algebra by Proposition \ref{fg}.

(ii) We claim that  
\begin{equation} \label{U*}
U_{T/R} = \left\{ \mathfrak{t} \in \operatorname{Max}T \ | \ R_{\mathfrak{t} \cap R} \text{ is noetherian} \right\}.
\end{equation}
Consider $\mathfrak{t} \in \operatorname{Max}T$ for which $R_{\mathfrak{t} \cap R}$ is noetherian.
Since $\mathfrak{t} \in \operatorname{Max}T$ and $T$ is a finitely generated $k$-algebra containing $S$, the intersection $\mathfrak{n} := \mathfrak{t} \cap S$ is a maximal ideal of $S$.
Furthermore, 
\begin{equation} \label{time}
\mathfrak{t} \cap R = \mathfrak{t} \cap S \cap R = \mathfrak{n} \cap R.
\end{equation}
Whence $R_{\mathfrak{t} \cap R} = R_{\mathfrak{n} \cap R}$.
Thus $R_{\mathfrak{n} \cap R}$ is noetherian since $R_{\mathfrak{t} \cap R}$ is noetherian.
Therefore $\mathfrak{n} \in U_{S/R}$ since $S'$ is a depiction of $R$.
Consequently,
\begin{equation} \label{R n cap R}
R_{\mathfrak{t} \cap R} = R_{\mathfrak{n} \cap R} = S_{\mathfrak{n}}.
\end{equation}

Furthermore,
\begin{multline} \label{time2}
S_{\mathfrak{n}} = S_{\mathfrak{t} \cap S} \subseteq T_{\mathfrak{t}} = ( \bigcap_{\mathfrak{n}' \in U_{S/R}} S_{\mathfrak{n}'} )_{\mathfrak{t}} \subseteq 
\bigcap_{\mathfrak{n}' \in U_{S/R}} \left( S_{\mathfrak{n}'} \right)_{\mathfrak{t} \cap S_{\mathfrak{n}'}} \\
\stackrel{\textsc{(i)}}{\subseteq} \left( S_{\mathfrak{n}} \right)_{\mathfrak{t} \cap S_{\mathfrak{n}}}
= (S_{\mathfrak{t} \cap S})_{\mathfrak{t} \cap (S_{\mathfrak{t}\cap S})} = S_{\mathfrak{t}\cap S},
\end{multline}
where (\textsc{i}) holds since $\mathfrak{n} \in U_{S/R}$.
Whence $S_{\mathfrak{n}} = T_{\mathfrak{t}}$.
Thus together with (\ref{R n cap R}) we obtain
$$R_{\mathfrak{t} \cap R} = S_{\mathfrak{n}} = T_{\mathfrak{t}}.$$
Therefore $\mathfrak{t} \in U_{T/R}$.
The converse inclusion ($\subseteq$) in (\ref{U*}) is clear.

(iii) The morphism $\iota_{T/R}: \operatorname{Spec}T \to \operatorname{Spec}R$ is surjective since it factors into surjective maps
$$\operatorname{Spec}T \stackrel{\iota_{T/S}}{\longrightarrow} \operatorname{Spec}S \stackrel{\iota_{S/R}}{\longrightarrow} \operatorname{Spec}R.$$ 
Indeed, $\iota_{T/S}$ is surjective by Lemma \ref{surjective morphism1}, and $\iota_{S/R}$ is surjective since $S$ is a depiction of $R$.

(iv) Finally, we claim that $U_{T/R}$ is nonempty.
Since $S$ is a depiction of $R$, there is some $\mathfrak{n} \in U_{S/R}$.
By Lemma \ref{surjective morphism1}, there is some $\mathfrak{t} \in \operatorname{Max}T$ such that $\mathfrak{t} \cap S = \mathfrak{n}$.
Thus
$$R_{\mathfrak{t}\cap R} \stackrel{\textsc{(i)}}{=} R_{\mathfrak{n} \cap R} = S_{\mathfrak{n}} \stackrel{\textsc{(ii)}}{=} T_{\mathfrak{t}},$$
where (\textsc{i}) holds by (\ref{time}), and (\textsc{ii}) holds by (\ref{time2}).
Therefore $\mathfrak{t} \in U_{T/R}$.
\end{proof}

\begin{Lemma} \label{t height 1}
Set $T := T_{S/R}$.
If $D_S \subseteq Z_{S/R}$, then $D_T \subseteq Z_{T/R}$.
\end{Lemma}

\begin{proof}
Let $\mathfrak{t} \in D_T$, and set $\mathfrak{q} := \mathfrak{t} \cap S$.
By Lemma \ref{contained in normalization}, $T$ is an integral extension of $S$.
Thus $\operatorname{ht}_T(\mathfrak{t}) = 1$ implies $\operatorname{ht}_{S}(\mathfrak{q}) = 1$ \cite[Theorem 46]{K}.
Whence $\mathfrak{q} \in Z_{S/R}$ since $S$ is noetherian in codimension 1.
But $\mathfrak{t} \cap R = (\mathfrak{t} \cap S) \cap R = \mathfrak{q} \cap R$.
Therefore $\mathfrak{t} \in Z_{T/R}$ by Lemma \ref{lemma 1}.
\end{proof}

\begin{Lemma} \label{gameover}
Suppose $D_{S'} \subseteq Z_{S'/R}$, and set $T := T_{S'/R}$.
The morphism 
$$D_T \to D_S, \ \ \ \mathfrak{t} \mapsto \mathfrak{t} \cap S,$$ 
is well-defined and surjective.
\end{Lemma}

\begin{proof}
(i) We first claim that the map $D_T \to D_S$ is well-defined.
Let $\mathfrak{t} \in D_T$.
Then $\mathfrak{t} \in Z_{T/R}$ by Lemma \ref{t height 1}.
Thus there is a unique prime $\mathfrak{q}' \in \operatorname{Spec}S$ such that $\mathfrak{q}' \cap R = \operatorname{t} \cap R$, and $\mathfrak{q}' \in D_S$, by Lemma \ref{lemma 1}.
But then
$$(\mathfrak{t} \cap S) \cap R = \mathfrak{t} \cap R = \mathfrak{q}' \cap R.$$
Therefore, by the uniqueness of $\mathfrak{q}'$, we have
$$\mathfrak{t} \cap S = \mathfrak{q}' \in D_S.$$

(ii) We now claim that the map $D_T \to D_S$ is surjective.
Let $\mathfrak{q} \in D_S$.
Set $\mathfrak{p} := \mathfrak{q} \cap R$.
By Theorem \ref{depiction}, there is a prime $\mathfrak{t} \in \iota_{T/R}^{-1}(\mathfrak{p})$; we want to show that $\mathfrak{t} \in D_T$.
Indeed, assume to the contrary that $\mathfrak{t} \not \in D_T$, that is, $\operatorname{ht}_T(\mathfrak{t}) \geq 2$.
Then there is a prime $\mathfrak{t}' \in D_T$ properly contained in $\mathfrak{t}$.
Since $\operatorname{ht}_T(\mathfrak{t}') = 1$, we have $\mathfrak{t}' \in Z_{T/R}$ by Lemma \ref{t height 1}.
Thus the containment 
\begin{equation*} \label{p not p'}
\mathfrak{t}' \cap R \subset \mathfrak{t} \cap R = \mathfrak{p}
\end{equation*}
is proper, by Lemma \ref{lemma 1}.
Consequently, the containment
$$\mathfrak{t}' \cap S \subset \mathfrak{q}$$
is also proper.
But $\mathfrak{t}' \cap S$ is a nonzero prime of $S$ since $\mathfrak{t}'$ is a nonzero prime of $T$.
Therefore $\operatorname{ht}_S(\mathfrak{q}) \geq 2$, contrary to our choice of $\mathfrak{q}$.
\end{proof}

\begin{Lemma} \label{gameover2}
Suppose $D_{S'} \subseteq Z_{S'/R}$, and set $T := T_{S'/R}$.
Then
$$\bigcap_{\mathfrak{t} \in D_T} T_{\mathfrak{t}} = \bigcap_{\mathfrak{q} \in D_S}S_{\mathfrak{q}}.$$
\end{Lemma}

\begin{proof}
We have
\begin{equation} \label{out}
D_T \stackrel{\textsc{(i)}}{\subseteq} Z_{T/R} \stackrel{\textsc{(ii)}}{\subseteq} \tilde{U}_{T/R},
\end{equation}
where (\textsc{i}) holds by Lemma \ref{t height 1}, and (\textsc{ii}) holds by Lemma \ref{lemma 0}.3.

Let $\mathfrak{t} \in D_T$, and set $\mathfrak{q} := \mathfrak{t} \cap S \in \operatorname{Spec}S$.
Then, since $\mathfrak{t} \cap R = \mathfrak{q} \cap R$, we have
$$T_{\mathfrak{t}} \stackrel{\textsc{(i)}}{=} R_{\mathfrak{t} \cap R} = R_{\mathfrak{q} \cap R} \subseteq S_{\mathfrak{q}} \subseteq T_{\mathfrak{t}},$$
where (\textsc{i}) holds by (\ref{out}) and Lemma \ref{lemma 0}.3. 
Whence, $T_{\mathfrak{t}} = S_{\mathfrak{t} \cap S}$.
Therefore
$$\bigcap_{\mathfrak{t} \in D_T} T_{\mathfrak{t}} \stackrel{\textsc{(i)}}{=} \bigcap_{\mathfrak{t}\cap S \in D_S} S_{\mathfrak{t} \cap S} \stackrel{\textsc{(ii)}}{=} \bigcap_{\mathfrak{q} \in D_S} S_{\mathfrak{q}},$$
where (\textsc{i}) holds since $D_T \to D_S$ is well-defined by Lemma \ref{gameover}; and (\textsc{ii}) holds since $D_T \to D_S$ is surjective, again by Lemma \ref{gameover}.
\end{proof}

Suppose $R$ has a unique maximal depiction $T$, but is not noetherian in codimension 1. 
Then in general $R$ may admit a depiction $S$ for which the morphism
$$\iota_{T/S}: \operatorname{Spec}T \to \operatorname{Spec}S, \ \ \ \ \mathfrak{t} \mapsto \mathfrak{t} \cap S,$$
is not surjective; see Example \ref{not 1} below.
However, if $R$ is noetherian in codimension 1, then we have the following.

\begin{Proposition} \label{surjective morphism} 
Suppose $D_{S'} \subseteq Z_{S'/R}$, and set $T := T_{S'/R}$.
Then for any depiction $S$ of $R$, the morphism $\iota_{T/S}: \operatorname{Spec}T \to \operatorname{Spec}S$ is surjective.
\end{Proposition}

\begin{proof}
(i) We first claim that for each $\mathfrak{n} \in \operatorname{Max}S$, $\mathfrak{n}T \not = T$.

Assume to the contrary that there is some $\mathfrak{n} \in \operatorname{Max}S$ for which $\mathfrak{n}T = T$.
Let $\overbar{S_{\mathfrak{n}}}$ be the normalization of $S_{\mathfrak{n}}$.
Then
\begin{equation} \label{long}
\bigcap_{\mathfrak{t} \in D_T} T_{\mathfrak{t}}
\stackrel{\textsc{(i)}}{=} 
\bigcap_{\mathfrak{q} \in D_S} S_{\mathfrak{q}}
\subseteq
\bigcap_{\mathfrak{q} \in D_S: \mathfrak{q} \subseteq \mathfrak{n}} S_{\mathfrak{q}} 
\stackrel{\textsc{(ii)}}{=}
\bigcap_{\mathfrak{q} \in D_S: \mathfrak{q} \subseteq \mathfrak{n}} \left( S_{\mathfrak{n}} \right)_{\mathfrak{q}S_{\mathfrak{n}}}
\stackrel{\textsc{(iii)}}{\subseteq}
\bigcap_{\mathfrak{s} \in D_{S_{\mathfrak{n}}}} \left( S_{\mathfrak{n}} \right)_{\mathfrak{s}}
\stackrel{\textsc{(iv)}}{=}
\overbar{S_{\mathfrak{n}}}.
\end{equation}
Indeed, (\textsc{i}) holds by Lemma \ref{gameover2}.
(\textsc{ii}) holds since if $\mathfrak{q} \in \operatorname{Spec}S$ is contained in $\mathfrak{n}$, then 
\begin{equation*} \label{Sq =}
S_{\mathfrak{q}} = \left( S_{\mathfrak{n}}\right)_{\mathfrak{q}S_{\mathfrak{n}}}.
\end{equation*}
(\textsc{iii}) holds since if $\mathfrak{s} \in \operatorname{Spec}S_{\mathfrak{n}}$ has height $1$, then $\mathfrak{s} \cap S \in \operatorname{Spec}S$ also has height $1$, and $\mathfrak{s} \cap S \subseteq \mathfrak{n}$.
Finally, (\textsc{iv}) holds since $S_{\mathfrak{n}}$ is a noetherian domain \cite[Theorem 11.5.ii]{M}. 

By integrality, there is a prime ideal $\mathfrak{n}'$ of the normalization $\overbar{S_{\mathfrak{n}}}$ lying over $\mathfrak{n}$ \cite[Theorem 44]{K}.
Thus
\begin{equation*}
1 \in T = \mathfrak{n}T \subseteq \mathfrak{n}\left( \bigcap_{\mathfrak{t} \in D_T} T_{\mathfrak{t}} \right) 
\stackrel{\textsc{(i)}}{\subseteq}
\mathfrak{n} \overbar{S_{\mathfrak{n}}} 
\subseteq \mathfrak{n}',
\end{equation*}
where (\textsc{i}) holds by (\ref{long}). 
But then $1$ is in $\mathfrak{n}'$, a contradiction. 

(ii) We claim that the morphism of maximal spectra 
$$\operatorname{Max}T \to \operatorname{Max}S, \ \ \ \mathfrak{t} \mapsto \mathfrak{t} \cap S,$$
is surjective.
Let $\mathfrak{n} \in \operatorname{Spec}S$.
Then there is a maximal ideal $\mathfrak{t} \in \operatorname{Max}T$ containing $\mathfrak{n}T$ since $\mathfrak{n}T \not = T$ by Claim (i).
Whence
$$\mathfrak{n} \subseteq \mathfrak{n}T \cap S \subseteq \mathfrak{t} \cap S \not = S.$$
Therefore $\mathfrak{t} \cap S = \mathfrak{n}$ since $\mathfrak{n}$ is a maximal ideal.

(iii) $T$ is a finitely generated $k$-algebra by Proposition \ref{fg}, and $S$ is a finitely generated $k$-algebra since $S$ is a depiction.
Therefore $\iota_{T/S}$ is also surjective, by \cite[Lemma 3.6]{B3}.\footnote{The assumption in \cite[Lemma 3.6]{B3} that $k$ is uncountable is not necessary here since $S$ is a finitely generated $k$-algebra, rather than a countably generated $k$-algebra.}
\end{proof}

Note that, by the definition of geometric height, each $\mathfrak{q} \in \operatorname{Spec}S$ satisfies
$$\operatorname{ght}_R(\mathfrak{q} \cap R) \leq \operatorname{ht}_S(\mathfrak{q}).$$

\begin{Lemma} \label{goodgoodbye}
The following are equivalent:
\begin{enumerate} 
\item For each $\mathfrak{p} \in \operatorname{Spec}R$ there is some $\mathfrak{q} \in \iota_{S/R}^{-1}(\mathfrak{p})$ such that 
\begin{equation*}
\operatorname{ght}_R(\mathfrak{p}) = \operatorname{ht}_S(\mathfrak{q}).
\end{equation*}
\item For each $\mathfrak{q} \in \operatorname{Spec}S$ of minimal height in $\iota_{S/R}^{-1}(\mathfrak{q} \cap R)$, we have
$$\operatorname{ght}_R(\mathfrak{q} \cap R) = \operatorname{ht}_S(\mathfrak{q}).$$
\end{enumerate}
\end{Lemma}

\begin{proof}
(1) $\Rightarrow$ (2): Suppose $\mathfrak{q} \in \operatorname{Spec}S$ has minimal height in $\iota_{S/R}^{-1}(\mathfrak{p})$, where $\mathfrak{p} := \mathfrak{q} \cap R$.
By assumption (1), there is some $\mathfrak{q}' \in \iota^{-1}_{S/R}(\mathfrak{p})$ such that $\operatorname{ht}_S(\mathfrak{q}') = \operatorname{ght}_R(\mathfrak{p})$.
Therefore
$$\operatorname{ght}_R(\mathfrak{p}) \leq \operatorname{ht}_S(\mathfrak{q}) \leq \operatorname{ht}_S(\mathfrak{q}') = \operatorname{ght}_R(\mathfrak{p}).$$

(2) $\Rightarrow$ (1): Let $\mathfrak{p} \in \operatorname{Spec}R$.
Since $S$ is a depiction of $R$, we have $\iota_{S/R}^{-1}(\mathfrak{p}) \not = \emptyset$; choose $\mathfrak{q} \in \iota_{S/R}^{-1}(\mathfrak{p})$ of minimal height.
Then $\operatorname{ght}_R(\mathfrak{p}) = \operatorname{ht}_S(\mathfrak{q})$ by assumption (2).
\end{proof}

\begin{Definition} \rm{
If either (hence both) of the conditions in Lemma \ref{goodgoodbye} are satisfied, then we call $S$ a \textit{saturated} depiction of $R$.
}\end{Definition}

\begin{Lemma} 
If $S$ is a saturated depiction of $R$, then each $\mathfrak{q} \in \operatorname{Spec}S$ of minimal height in $\iota_{S/R}^{-1}(\mathfrak{q} \cap R)$ satisfies
$$\operatorname{gdim}(\mathfrak{q} \cap R) = \operatorname{dim}S/\mathfrak{q}.$$
\end{Lemma}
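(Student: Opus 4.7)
The plan is to unwind the definitions and then invoke the dimension formula available for finitely generated domains over a field. By definition,
$$\operatorname{gdim}(\mathfrak{q}\cap R) = \operatorname{dim}R - \operatorname{ght}_R(\mathfrak{q}\cap R),$$
so the first step is to apply the saturation hypothesis, which replaces $\operatorname{ght}_R(\mathfrak{q}\cap R)$ by $\operatorname{ht}_S(\mathfrak{q})$ (since $\mathfrak{q}$ is minimal over $\mathfrak{q}\cap R$). Then Lemma \ref{lemma 0}.1 gives $\operatorname{dim}R = \operatorname{dim}S$, reducing the claim to the purely ring-theoretic identity
$$\operatorname{dim}S - \operatorname{ht}_S(\mathfrak{q}) = \operatorname{dim}S/\mathfrak{q}.$$

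For the final step I would appeal to the standard fact that a finitely generated integral domain $S$ over a field is catenary and equidimensional: for every $\mathfrak{q} \in \operatorname{Spec}S$,
$$\operatorname{ht}_S(\mathfrak{q}) + \operatorname{dim}S/\mathfrak{q} = \operatorname{dim}S.$$
This follows from Noether normalization and the going-down/up theorems for $k[x_1,\dots,x_n] \hookrightarrow S$; it is stated for instance in \cite[Corollary 13.4 and Theorem 13.8]{E}. Since $S$ is a depiction of $R$, it is by assumption a finitely generated $k$-algebra which is an integral domain, so the formula applies.

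Putting the three steps together in order yields
$$\operatorname{gdim}(\mathfrak{q}\cap R) = \operatorname{dim}R - \operatorname{ght}_R(\mathfrak{q}\cap R) = \operatorname{dim}S - \operatorname{ht}_S(\mathfrak{q}) = \operatorname{dim}S/\mathfrak{q},$$
which is the desired equality. No real obstacle is expected here; the proof is essentially a bookkeeping exercise built on top of the saturation hypothesis and the standard catenary dimension formula for finitely generated $k$-algebras that are integral domains. The only subtlety to flag is that the saturation hypothesis must be invoked precisely at primes $\mathfrak{q}$ that are minimal over $\mathfrak{q}\cap R$, which is exactly the assumption in the statement.
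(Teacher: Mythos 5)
Your proof is correct and follows the same route as the paper: unwind the definition of geometric dimension, use saturation to replace $\operatorname{ght}_R(\mathfrak{q}\cap R)$ with $\operatorname{ht}_S(\mathfrak{q})$, invoke Lemma \ref{lemma 0}.1 for $\operatorname{dim}R = \operatorname{dim}S$, and finish with the catenary dimension formula for finitely generated domains over a field. The only cosmetic difference is that the paper cites Serre's \textit{Local algebra} for the last step rather than Eisenbud.
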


\begin{proof}
We have
$$\operatorname{gdim}(\mathfrak{q}\cap R) := \operatorname{dim}R - \operatorname{ght}_R(\mathfrak{q} \cap R) \stackrel{\textsc{(i)}}{=} \operatorname{dim}S - \operatorname{ht}_S(\mathfrak{q}) \stackrel{\textsc{(ii)}}{=} \operatorname{dim} S/\mathfrak{q},$$
where (\textsc{i}) holds by Lemma \ref{lemma 0}.1, and (\textsc{ii}) holds since $S$ is a finite type integral domain \cite[Proposition III.15]{S}.
\end{proof}

\begin{Lemma} \label{after}
Let $S$ be a noetherian integral domain, and let $U$ be a nonempty open subset of $\operatorname{Max}S$.
For each nonzero $\mathfrak{q} \in \operatorname{Spec}S$ there is some $\mathfrak{p} \in \operatorname{Spec}S$ contained in $\mathfrak{q}$ such that 
$$\mathcal{Z}(\mathfrak{p}) \cap U \not = \emptyset \ \ \ \text{ and } \ \ \ \operatorname{ht}(\mathfrak{p}) = \operatorname{ht}(\mathfrak{q}) -1.$$ 
\end{Lemma}

\begin{proof}
Fix $\mathfrak{q} \in \operatorname{Spec}S$.
Denote by $Q$ the set of primes $\mathfrak{p} \in \operatorname{Spec}S$ that are properly contained in $\mathfrak{q}$ and satisfy $\operatorname{ht}(\mathfrak{p}) = \operatorname{ht}(\mathfrak{q}) -1$.

Assume to the contrary that $\mathcal{Z}(\mathfrak{p}) \cap U = \emptyset$ for each $\mathfrak{p} \in Q$.
Then
$$\left( \cup_{\mathfrak{p} \in Q} \mathcal{Z}(\mathfrak{p}) \right) \cap U = \emptyset.$$ 
Whence $\overline{\cup_{\mathfrak{p} \in Q} \mathcal{Z}(\mathfrak{p})} \cap U = \emptyset$ since $U$ is open by Lemma \ref{lemma 0}.2.
Thus
$$\emptyset \not = U \subseteq \left(\overline{\cup_{\mathfrak{p} \in Q} \mathcal{Z}(\mathfrak{p})} \right)^c = \mathcal{Z}(\cap_{\mathfrak{p} \in Q} \mathfrak{p} )^c.$$
Therefore the ideal $I := \cap_{\mathfrak{p} \in Q} \mathfrak{p}$ is nonzero; say $0 \not = a \in I$.
In particular, $a \in \mathfrak{q}$ since for each $\mathfrak{p} \in Q$, $I \subseteq \mathfrak{p} \subset \mathfrak{q}$.

By \cite[Lemma 3.2]{F}, if $C$ is a noetherian integral domain, $\mathfrak{t} \in \operatorname{Spec}C$, and $0 \not = c \in \mathfrak{t}$, then there is a prime $\mathfrak{s} \in \operatorname{Spec}C$ such that $\operatorname{ht}(\mathfrak{s}) = \operatorname{ht}(\mathfrak{t}) - 1$ and $\mathfrak{s} \not \ni c$.
In our case we may take $C = S_{\mathfrak{q}}$, $\mathfrak{t} = \mathfrak{q}S_{\mathfrak{q}}$, and $c = a$.
Then there is a prime $\overbar{\mathfrak{p}} \in \operatorname{Spec}S_{\mathfrak{q}}$ such that
$$\operatorname{ht}_{S_{\mathfrak{q}}}(\overbar{\mathfrak{p}}) = \operatorname{ht}_{S_{\mathfrak{q}}}(\mathfrak{q}S_{\mathfrak{q}}) -1 = \operatorname{ht}_S(\mathfrak{q}) -1 \ \ \ \text{ and } \ \ \ \overbar{\mathfrak{p}} \not \ni a.$$
Set $\mathfrak{p} := \overbar{\mathfrak{p}} \cap S$.
Then $\operatorname{ht}_S(\mathfrak{p}) = \operatorname{ht}_{S_{\mathfrak{q}}}(\overbar{\mathfrak{p}})$ and $\mathfrak{p} \subset \mathfrak{q}$.
Thus $\mathfrak{p} \in Q$.
But $a \not \in \mathfrak{p}$ since $a \not \in \overbar{\mathfrak{p}}$. 
Therefore $a \not \in I$, contrary to our choice of $a$.
\end{proof}

\begin{Theorem} \label{geometric height}
Suppose $D_{S} \subseteq Z_{S/R}$.
Then $T_{S/R}$ is saturated. 
\end{Theorem}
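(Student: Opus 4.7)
Let $\mathfrak{t} \in \operatorname{Spec} T$ be minimal over $\mathfrak{p} := \mathfrak{t} \cap R$. Since $T$ is itself a depiction of $R$ (Theorem \ref{depiction}), the inequality $\operatorname{ght}_R(\mathfrak{p}) \leq \operatorname{ht}_T(\mathfrak{t})$ is immediate from the definition of geometric height, so the work lies in the reverse direction. The cleanest route is to establish $\mathfrak{t} \in Z_{T/R}$ and then invoke Proposition \ref{lemma 1.1}.

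Setting $\mathfrak{q} := \mathfrak{t} \cap S$, a first useful observation is the height identity $\operatorname{ht}_T(\mathfrak{t}) = \operatorname{ht}_S(\mathfrak{q})$. Indeed, since $T$ is integral over $S$ by Lemma \ref{contained in normalization}, the induced inclusion $S/\mathfrak{q} \hookrightarrow T/\mathfrak{t}$ of finitely generated $k$-algebra domains is integral, so $\dim S/\mathfrak{q} = \dim T/\mathfrak{t}$; combined with $\dim S = \dim T = \dim R$ (Lemma \ref{lemma 0}.1) and the catenary equidimensional property of such algebras, this yields the claim.

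The main step is to prove $\mathfrak{q} \in Z_{S/R}$. Once this is in hand, Lemma \ref{lemma 1} applied to the depictions $S$ and $T$ produces a unique prime $\mathfrak{t}_0 \in Z_{T/R}$ satisfying $\mathfrak{t}_0 \cap R = \mathfrak{p}$; a second application of Lemma \ref{lemma 1} with $T$ in both roles forces the prime of $T$ lying over $\mathfrak{p}$ to be unique, so $\mathfrak{t} = \mathfrak{t}_0 \in Z_{T/R}$. For $\operatorname{ht}_S(\mathfrak{q}) \leq 1$ the inclusion $\mathfrak{q} \in Z_{S/R}$ is immediate from the noetherian-in-codimension-$1$ hypothesis, essentially as in the proof of Lemma \ref{t height 1}. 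For $\operatorname{ht}_S(\mathfrak{q}) \geq 2$ one must exploit the minimality of $\mathfrak{t}$ over $\mathfrak{p}$ more substantially: the bad locus $\operatorname{Max} S \setminus U_{S/R}$ has codimension at least $2$ in $\operatorname{Max} S$ (a direct consequence of the codim-$1$ assumption), and a hypothetical containment $\mathcal{Z}(\mathfrak{q}) \subseteq \operatorname{Max} S \setminus U_{S/R}$ should, through the explicit description $T = \bigcap_{\mathfrak{n} \in U_{S/R}} S_\mathfrak{n}$ together with the integral sandwich $S \subseteq T \subseteq \overbar{S}$, produce a proper prime $\mathfrak{t}' \subsetneq \mathfrak{t}$ of $T$ still lying over $\mathfrak{p}$, contradicting the minimality of $\mathfrak{t}$.

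The principal obstacle is implementing the last contradiction: turning the geometric containment $\mathcal{Z}(\mathfrak{q}) \subseteq \operatorname{Max} S \setminus U_{S/R}$ into a concrete prime strictly contained in $\mathfrak{t}$ and still lying over $\mathfrak{p}$. Classical going-down from $S$ to $T$ is unavailable here since $S$ need not be normal, so the argument must instead be built directly from the intersection description of $T$ and the sandwich with $\overbar{S}$, leveraging the surjectivity of $\iota_{T/S}$ (Lemma \ref{surjective morphism1}) and Proposition \ref{surjective morphism} to transfer the required chains between the three rings $S$, $T$, and $\overbar{S}$.
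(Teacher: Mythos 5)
Your main strategy---establishing $\mathfrak{t} \in Z_{T/R}$ by first showing $\mathfrak{q} := \mathfrak{t} \cap S \in Z_{S/R}$ and then invoking Proposition \ref{lemma 1.1}---cannot be made to work, because a minimal prime $\mathfrak{t}$ over $\mathfrak{t}\cap R$ need not belong to $Z_{T/R}$ when its height is at least 2. Concretely, take $S = T = k[x,y,z]$ and $R = k + (x,y)T$ (the paper's running example for the main theorem, where $T$ is normal and hence the unique maximal depiction, and $R$ is noetherian in codimension $1$). Then $\mathfrak{t} = (x,y)T$ is minimal over $\mathfrak{t} \cap R = (x,y)T$ and has height $2$, yet $\mathcal{Z}(\mathfrak{t}) = \{x=y=0\}$ is entirely contained in the non-noetherian locus $\operatorname{Max}T \setminus U_{T/R} = \mathcal{Z}(x,y)$, so $\mathfrak{t} \notin Z_{T/R}$. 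The contradiction you hope to force in the case $\operatorname{ht}_S(\mathfrak{q}) \geq 2$---a prime $\mathfrak{t}' \subsetneq \mathfrak{t}$ of $T$ still lying over $\mathfrak{p}$---simply does not exist here: every prime of $T$ contracting to $(x,y)T$ contains $(x,y)T = \mathfrak{t}$. The conclusion $\operatorname{ght}_R(\mathfrak{p}) = 2 = \operatorname{ht}_T(\mathfrak{t})$ is of course still correct, but it is not witnessed by membership in $Z_{T/R}$, so Proposition \ref{lemma 1.1} is unavailable.

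The paper avoids this by a chain argument that never requires $\mathfrak{t}$ itself to lie in $Z_{T/R}$. It chooses a depiction $S'$ and a prime $\mathfrak{q}' \in \iota_{S'/R}^{-1}(\mathfrak{p})$ realizing $\operatorname{ght}_R(\mathfrak{p})$, lifts $\mathfrak{q}'$ to a minimal prime $\mathfrak{t} \in \operatorname{Spec}T$ over it using Proposition \ref{surjective morphism}, and then---using only that $U_{T/R}$ is open and dense (Lemma \ref{lemma 0}.2)---selects a maximal chain $0 \subset \mathfrak{t}_1 \subset \cdots \subset \mathfrak{t}_m = \mathfrak{t}$ whose \emph{penultimate} term satisfies $\mathcal{Z}(\mathfrak{t}_{m-1}) \cap U_{T/R} \neq \emptyset$. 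This condition propagates down the chain, and Lemmas \ref{lemma 0}.3 and \ref{lemma 0}.4 then show the induced chain in $S'$ is strict, giving $\operatorname{ht}_T(\mathfrak{t}) \leq \operatorname{ht}_{S'}(\mathfrak{q}') = \operatorname{ght}_R(\mathfrak{p})$; the reverse inequality is the trivial one. Your ancillary observations---that $\operatorname{ht}_T(\mathfrak{t}) = \operatorname{ht}_S(\mathfrak{t}\cap S)$ by integrality and catenarity, and that $Z_{S/R}$ membership is automatic when $\operatorname{ht}_S(\mathfrak{q}) \leq 1$---are correct, but they do not rescue the approach, since the genuinely difficult case $\operatorname{ht}_S(\mathfrak{q}) \geq 2$ is exactly where the method breaks.
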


\begin{proof}
Set $T:= T_{S/R}$.
Let $\mathfrak{p} \in \operatorname{Spec}R$.
Then there is a depiction $S'$ of $R$ such that for some $\mathfrak{q}$ in $\iota_{S'/R}^{-1}(\mathfrak{p})$, we have
$$\operatorname{ght}_R(\mathfrak{p}) = \operatorname{ht}_{S'}(\mathfrak{q}).$$

By Proposition \ref{surjective morphism}, $\iota_{T/S'}^{-1}(\mathfrak{q}) \not = \emptyset$; say $\mathfrak{t} \in \iota_{T/S'}^{-1}(\mathfrak{q})$.
Furthermore, $U_{T/R}$ is an open dense subset of $\operatorname{Max}T$, by Lemma \ref{lemma 0}.2.
Thus there is a prime $\mathfrak{t}' \in \operatorname{Spec}T$ properly contained in $\mathfrak{t}$, and maximal with respect to this inclusion, such that $\mathcal{Z}_T(\mathfrak{t}') \cap U_{T/R} \not = \emptyset$, by Lemma \ref{after}.

Set $m := \operatorname{ht}_T(\mathfrak{t})$. 
Consider a maximal chain of prime ideals of $T$ contained in $\mathfrak{t}'$,
\begin{equation*} \label{tm-1}
0 \subset \mathfrak{t}_1 \subset \mathfrak{t}_2 \subset \cdots \subset \mathfrak{t}_{m-1} = \mathfrak{t}' \subset \mathfrak{t}_m = \mathfrak{t},
\end{equation*} 
and the corresponding chain of prime ideals of $S'$,
\begin{equation} \label{chain 2}
0 \subset \mathfrak{t}_1 \cap S' \subseteq \mathfrak{t}_2 \cap S' \subseteq \cdots \subseteq \mathfrak{t}_m \cap S' = \mathfrak{q}.
\end{equation}
We claim that the chain (\ref{chain 2}) is strict.
Indeed, assume to the contrary that there is some $1 \leq i < m$ for which $\mathfrak{t}_i \cap S' = \mathfrak{t}_{i+1} \cap S'$.
Then $\mathfrak{t}_i \cap R = \mathfrak{t}_{i+1} \cap R$.
Furthermore, $\mathcal{Z}_T(\mathfrak{t}_i) \cap U_{T/R} \not = \emptyset$ since $\mathcal{Z}_T(\mathfrak{t}_{m-1}) \cap U_{T/R} \not = \emptyset$.
But then $\mathfrak{t}_i = \mathfrak{t}_{i+1}$ by Lemmas \ref{lemma 0}.3 and \ref{lemma 0}.4, a contradiction.

It follows that
$$\operatorname{ght}_R(\mathfrak{p}) \stackrel{\textsc{(i)}}{\leq} \operatorname{ht}_T(\mathfrak{t}) = m \stackrel{\textsc{(ii)}}{\leq} \operatorname{ht}_{S'}(\mathfrak{q}) = \operatorname{ght}_R(\mathfrak{p}),$$
where (\textsc{i}) holds since  $\mathfrak{t} \cap R = \mathfrak{t} \cap S' \cap R = \mathfrak{q} \cap R = \mathfrak{p}$, and (\textsc{ii}) holds since the chain (\ref{chain 2}) is strict.
Therefore $\operatorname{ght}_R(\mathfrak{p}) = \operatorname{ht}_T(\mathfrak{t})$.
\end{proof}

\begin{Proposition} \label{independent}
Each codimension 1 subvariety of $\operatorname{Max}S$ intersects $U_{S/R}$ if and only if each codimension 1 subvariety of $\operatorname{Max}S'$ intersects $U_{S'/R}$:
$$D_S \subseteq Z_{S/R} \ \ \ \Longleftrightarrow \ \ \ D_{S'} \subseteq Z_{S'/R}.$$
In particular, the definition of `noetherian in codimension 1' is independent of the choice of depiction.
\end{Proposition}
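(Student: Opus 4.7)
The statement is symmetric in $S$ and $S'$, so it suffices to assume that every codimension $1$ subvariety of $\operatorname{Max}S$ meets $U_{S/R}$ and deduce the analogous statement for $S'$. Under this hypothesis $R$ is noetherian in codimension $1$ through $S$, so the results of this section are all available: the ring $T = \bigcap_{\mathfrak{n} \in U_{S/R}} S_\mathfrak{n}$ is a depiction of $R$ containing $S'$ (Proposition \ref{contains}), it is noetherian and a finitely generated $k$-algebra (Proposition \ref{fg}), and $\iota_{T/S'} \colon \operatorname{Spec}T \to \operatorname{Spec}S'$ is surjective (Proposition \ref{surjective morphism}).

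Fix a height $1$ prime $\mathfrak{q}' \in \operatorname{Spec}S'$. The crux is to lift $\mathfrak{q}'$ to a \emph{height $1$} prime of $T$, so that Lemma \ref{t height 1} applies. I would choose a nonzero $f \in \mathfrak{q}'$; Krull's principal ideal theorem then forces $\mathfrak{q}'$ to be minimal over $(f)$ in $S'$. Using surjectivity of $\iota_{T/S'}$, pick $\mathfrak{t}'' \in \operatorname{Spec}T$ with $\mathfrak{t}'' \cap S' = \mathfrak{q}'$; then $f \in \mathfrak{t}''$, so one can select a prime $\mathfrak{t} \subseteq \mathfrak{t}''$ minimal over $(f)T$ in the noetherian ring $T$. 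By Krull again $\operatorname{ht}_T(\mathfrak{t}) = 1$, and $\mathfrak{t} \cap S'$ is a prime of $S'$ containing $f$ and contained in $\mathfrak{q}'$, so minimality of $\mathfrak{q}'$ over $(f)$ forces $\mathfrak{t} \cap S' = \mathfrak{q}'$.

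With this height $1$ lift in hand, Lemma \ref{t height 1} places $\mathfrak{t}$ in $Z_{T/R}$, so I can pick $\mathfrak{m} \in U_{T/R}$ with $\mathfrak{m} \supseteq \mathfrak{t}$ and set $\mathfrak{n}' := \mathfrak{m} \cap S'$. Because $T$ is a finitely generated $k$-algebra over $S'$ and $k$ is algebraically closed, $\mathfrak{n}'$ is a maximal ideal of $S'$, and by construction $\mathfrak{n}' \supseteq \mathfrak{q}'$. To conclude that $\mathfrak{n}' \in U_{S'/R}$, apply Lemma \ref{lemma 0}.5 to the pair of depictions $T$ and $S'$ to produce $\mathfrak{n}'' \in U_{S'/R}$ with $\mathfrak{n}'' \cap R = \mathfrak{m} \cap R = \mathfrak{n}' \cap R$; since $U_{S'/R} \subseteq \tilde{U}_{S'/R}$, Lemma \ref{lemma 0}.4 then forces $\mathfrak{n}' = \mathfrak{n}''$, giving $\mathfrak{n}' \in \mathcal{Z}(\mathfrak{q}') \cap U_{S'/R}$.

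The main obstacle I anticipate is the height control in the second paragraph: a generic lift of $\mathfrak{q}'$ under $\iota_{T/S'}$ can have height exceeding $1$ in $T$, which would block Lemma \ref{t height 1} outright. The double invocation of Krull's principal ideal theorem, first in $S'$ to recognize $\mathfrak{q}'$ as minimal over a principal ideal and then in $T$ to carve out a height $1$ minimal prime over the same element sitting below the chosen lift, is precisely what transports the height $1$ property across the extension $S' \subseteq T$.
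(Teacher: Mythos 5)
Your proof is correct, and it takes a genuinely different route from the paper's in its two crucial steps, while sharing the same overall scaffolding (lift to $T$, invoke Lemma \ref{t height 1}).

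Both proofs pass through $T$ and Lemma \ref{t height 1}, but you and the author obtain the height-$1$ prime of $T$ by different means. The paper pulls the height-$1$ prime $\mathfrak{q}' \subset S'$ down to $\mathfrak{p} = \mathfrak{q}' \cap R$, lifts $\mathfrak{p}$ to a minimal prime $\mathfrak{t} \subset T$ via surjectivity of $\iota_{T/R}$, and then squeezes its height using the saturatedness of $T$ (Theorem \ref{geometric height}): $1 \leq \operatorname{ht}_T(\mathfrak{t}) = \operatorname{ght}_R(\mathfrak{p}) \leq \operatorname{ht}_{S'}(\mathfrak{q}') = 1$. You instead lift $\mathfrak{q}'$ directly to $\operatorname{Spec}T$ using Proposition \ref{surjective morphism}, and then do a descent within $\operatorname{Spec}T$ via Krull's principal ideal theorem applied twice (once in $S'$ to realize $\mathfrak{q}'$ as minimal over $(f)$, once in $T$ to find a height-$1$ prime below the lift still contracting to $\mathfrak{q}'$). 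This bypasses Theorem \ref{geometric height} entirely, which is a small simplification of the logical dependencies. The concluding step also differs: the paper deduces $\mathfrak{q}' \in Z_{S'/R}$ at one stroke from Lemma \ref{lemma 1}'s uniqueness statement, while you produce an explicit witness $\mathfrak{n}' \in \mathcal{Z}(\mathfrak{q}') \cap U_{S'/R}$ by intersecting a maximal ideal of $T$ with $S'$ and then identifying it as an element of $U_{S'/R}$ via Lemma \ref{lemma 0}.4 and \ref{lemma 0}.5 --- essentially re-running the core of Lemma \ref{lemma 1}'s proof in the situation at hand. The net effect is a more hands-on argument that relies only on the earlier, more elementary lemmas of the section; the paper's version is shorter because it leverages the saturatedness theorem that was just proved.
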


\begin{proof}
Suppose $D_S \subseteq Z_{S/R}$, and consider $\mathfrak{q} \in D_{S'}$; we want to show that $\mathfrak{q} \in Z_{S'/R}$.

Set $T := T_{S/R}$ and $\mathfrak{p} := \mathfrak{q} \cap R$.
By Theorems \ref{depiction} and \ref{geometric height}, $T$ is a saturated depiction of $R$.
Thus there is some $\mathfrak{t} \in \iota_{T/R}^{-1}(\mathfrak{p})$ such that $\operatorname{ht}_T(\mathfrak{t}) = \operatorname{ght}_R(\mathfrak{p})$.
Therefore
$$1 \stackrel{\textsc{(i)}}{\leq} \operatorname{ht}_T(\mathfrak{t}) = \operatorname{ght}_R(\mathfrak{p}) \stackrel{\textsc{(ii)}}{\leq} \operatorname{ht}_{S'}(\mathfrak{q}) = 1,$$
where (\textsc{i}) holds since $\mathfrak{t} \not = 0$, and (\textsc{ii}) holds since $\mathfrak{q} \cap R = \mathfrak{p}$.
Whence $\operatorname{ht}_T(\mathfrak{t}) = 1$.
Thus $\mathfrak{t} \in Z_{T/R}$ by Lemma \ref{t height 1}.
Therefore there is a unique prime $\mathfrak{q}' \in \operatorname{Spec}S'$ such that $\mathfrak{q}' \cap R = \mathfrak{t} \cap R$, and $\mathfrak{q}' \in Z_{S'/R}$, by Lemma \ref{lemma 1}.
But
$$\mathfrak{q} \cap R = \mathfrak{p} = \mathfrak{t} \cap R = \mathfrak{q}' \cap R.$$
It follows that $\mathfrak{q}' = \mathfrak{q}$, by the uniqueness of $\mathfrak{q}'$.
Therefore $\mathfrak{q} \in Z_{S'/R}$.
\end{proof}

\begin{Theorem} \label{normal theorem}
Suppose $R$ is noetherian in codimension $1$.
Let $S$ and $S'$ be arbitrary depictions of $R$.
Then
\begin{equation*} \label{bye}
T := T_{S/R} = T_{S'/R},
\end{equation*}
and $T$ is the unique maximal depiction of $R$. 
Furthermore, $T$ is contained in the normalization of each depiction of $R$,
\begin{equation*} \label{byebye}
S \subseteq T \subseteq \overbar{S}.
\end{equation*}
In particular, if $S = \overbar{S}$ is normal, then $S$ is the unique maximal depiction of $R$, as well as the unique normal depiction of $R$.
\end{Theorem}

\begin{proof}
The overrings $T_{S/R}$ and $T_{S'/R}$ are both depictions of $R$ by Proposition \ref{independent} and Theorem \ref{depiction}.
But $T_{S/R}$ and $T_{S'/R}$ each contain every depiction of $R$, by Proposition \ref{contains}.
Therefore $T_{S/R} = T_{S'/R}$.
Finally, the inclusion $T \subseteq \overbar{S}$ holds by Lemma \ref{contained in normalization}.
\end{proof}

\section{Saturated depictions of coordinate rings with a unique positive dimensional closed point} \label{specialsection}

Rings of the form $R = k + I$, where $I$ is an ideal of a finite type integral domain $S$, form a particularly nice class of nonnoetherian rings in the study of nonnoetherian geometry.
It was shown in \cite[Corollary 1.3]{B2} that if $I$ is a proper nonzero non-maximal radical ideal of $S$, then the following are equivalent:
\begin{enumerate}
 \item $\operatorname{dim}S/I \geq 1$.
 \item $R$ is nonnoetherian.
 \item $R$ is depicted by $S$.
\end{enumerate}
Furthermore, if $R$ is nonnoetherian, then 
\begin{equation*} \label{gbye}
U_{S/R} = \mathcal{Z}_S(I)^c.
\end{equation*}

In the following, we do not assume that $R$ is noetherian in codimension $1$.

\begin{Theorem} \label{wind}
Let $I$ be a nonzero radical ideal of $S$ such that $\operatorname{dim}S/I \geq 1$, and set $R := k + I$.
If $S$ is a unique factorization domain or $\operatorname{ht}_S(I) = 1$, then $S$ is a saturated depiction of $R$.
\end{Theorem}

\begin{proof}
By \cite[Corollary 1.3]{B2}, $S$ is a depiction of $R$ since $I$ is a nonzero radical ideal of $S$ satisfying $\operatorname{dim}S/I \geq 1$. 

(i) First suppose $S$ is a UFD and $\operatorname{ht}_S(I) \geq 2$. 
Then $D_S \subseteq Z_{S/R}$, since $U_{S/R} = \mathcal{Z}_S(I)^c$.
Furthermore,
$$T_{S/R} := \Gamma(U_{S/R}) = \Gamma(\mathcal{Z}_S(I)^c) \stackrel{\textsc{(i)}}{=} S,$$ 
where (\textsc{i}) holds since $S$ is a UFD and $\operatorname{ht}_S(I) \geq 2$.
Therefore $S$ is saturated by Theorem \ref{geometric height}.

(ii) Now suppose $\operatorname{ht}_S(I) = 1$.
Consider $\mathfrak{q} \in \iota_{S/R}^{-1}(\mathfrak{q}\cap R)$ with minimal height.
Then either $\mathfrak{q} \in Z_{S/R}$, or $\mathfrak{q}$ has minimal height such that $\mathcal{Z}_S(\mathfrak{q}) \subseteq \mathcal{Z}_S(I)$.

If $\mathfrak{q} \in Z_{S/R}$, then by Proposition \ref{lemma 1.1}, 
$$\operatorname{ght}_R(\mathfrak{q}\cap R) = \operatorname{ht}_S(\mathfrak{q}).$$
So suppose $\mathfrak{q}$ has minimal height such that $\mathcal{Z}_S(\mathfrak{q}) \subseteq \mathcal{Z}_S(I)$.
Then $\mathfrak{q} \supseteq I$ since $\mathfrak{q}$ and $I$ are radical ideals of $S$.
Hence, $\mathfrak{q} \cap R = I$ since $I$ is a maximal ideal of $R$.
Thus
$$1 \stackrel{\textsc{(i)}}{\leq} \operatorname{ght}_R(I) \stackrel{\textsc{(ii)}}{\leq} \operatorname{ht}_S(\mathfrak{q}) \stackrel{\textsc{(iii)}}{=} \operatorname{ht}_S(I) \stackrel{\textsc{(iv)}}{=} 1,$$
where (\textsc{i}) holds since $I \not = 0$ and $S$ is an integral domain; (\textsc{ii}) holds since $\mathfrak{q} \cap R = I$; (\textsc{iii}) holds since $\mathfrak{q}$ is a minimal prime over $I$ of minimal height; and (\textsc{iv}) holds by assumption.
Consequently,
$$\operatorname{ght}_R(I) = \operatorname{ht}_S(\mathfrak{q}).$$
Therefore $S$ is saturated.
\end{proof}

\section{Examples} \label{examplesection}

\begin{Example} \label{j geq 1} \rm{
Consider the family of algebras
$$S_j := k[x,y,xz,yz,xz^2, yz^2, \ldots, xz^{j-1},yz^{j-1}, z^j] \ \ \ \text{ and } \ \ \ R := k + (x,y)S_1,$$
where $j \geq 1$, and $(x,y)S_1$ is the ideal of $S_1 = k[x,y,z]$ generated by $x$ and $y$.
By Theorem \ref{wind}, $S_1$ is a saturated depiction of $R$ with 
$$U_{S_1/R} = \mathcal{Z}_{S_1}(x,y)^c.$$ 
Since each $2$-dimensional subvariety of $\operatorname{Max}S_1 = \mathbb{A}_k^3$ intersects the complement of the line $\mathcal{Z}_{S_1}(x,y)$, $R$ is noetherian in codimension $1$.
Furthermore, since $S_1$ is a polynomial ring, it is normal.
Therefore $S_1$ is the unique maximal depiction of $R$, as well as the unique normal depiction of $R$, by Theorem \ref{normal theorem}.

We will show that each $S_j \subseteq S_1$ is also a depiction of $R$.
Fix $j \geq 1$.  

We first claim that $\iota_{S_j/R}: \operatorname{Spec}S_j \to \operatorname{Spec}R$ is surjective.
Let $\mathfrak{p} \in \operatorname{Spec}R$.
Since $S_1$ is a depiction of $R$, there is a prime $\mathfrak{q} \in \operatorname{Spec}S_1$ such that $\mathfrak{q} \cap R = \mathfrak{p}$.
But $R \subset S_j \subseteq S_1$.
Therefore the prime $\mathfrak{q}' := \mathfrak{q} \cap S_j \in \operatorname{Spec}S_j$ satisfies $\mathfrak{q}' \cap R = \mathfrak{p}$, proving our claim. 
 
We now claim that (\ref{condition}) in Definition \ref{depiction def} holds.
Let $\mathfrak{n} \in \operatorname{Max}S_j$ be such that $R_{\mathfrak{n}\cap R}$ is noetherian.
$S_1$ is a finitely generated $S_j$-module with generating set $\{1, z,z^2, \ldots, z^{j-1} \}$.
Thus, by Nakayama's lemma, $\mathfrak{n}S_1 \not = S_1$.
Therefore there is some $\mathfrak{t} \in \operatorname{Max}S_1$ such that $\mathfrak{t} \cap S_j = \mathfrak{n}$. 
Furthermore, since $S_1$ is a depiction of $R$ and $R_{\mathfrak{t} \cap R} = R_{\mathfrak{n} \cap R}$ is noetherian, we have $(S_1)_{\mathfrak{t}} = R_{\mathfrak{t} \cap R}$.
Thus,
$$(S_j)_{\mathfrak{n}} = (S_j)_{\mathfrak{t} \cap S_j} \subseteq (S_1)_{\mathfrak{t}} = R_{\mathfrak{t} \cap R} = R_{\mathfrak{n} \cap R} \subseteq (S_j)_{\mathfrak{n}}.$$
Whence $(S_j)_{\mathfrak{n}} = R_{\mathfrak{n} \cap R}$, proving our claim. 
$S_j$ is therefore a depiction of $R$.
}\end{Example}

In the following example, we show that if $R$ is not noetherian in codimension $1$, then $R$ may admit a unique maximal depiction $T$ which is not saturated. 
This example demonstrates that unique maximal depictions, when they exist, are not necessarily the `right' depictions for some nonnoetherian rings.

\begin{Example} \label{not 1} \rm{ 
Consider the algebras
$$T := k[x,x^{-1}, y], \ \ \ \ S := k[x,y], \ \ \ \ R := k + I,$$
where $I := x(x-1,y)S$.
We will show that $T$ is the unique maximal depiction of $R$, but is not saturated.

Since $\operatorname{dim}S/I = 1$, $S$ is a saturated depiction of $R$, by Theorem \ref{wind}.
However, since the localization $R_{xS \cap R} = R_I$ is not noetherian, $R$ is not noetherian in codimension $1$, by Proposition \ref{independent}.
Clearly $T$ is also a depiction of $R$, with
\begin{equation} \label{check}
U_{T/R} = \mathcal{Z}_T(\mathfrak{t}_0)^c,
\end{equation}
where $\mathfrak{t}_0 := (x-1,y)T \in \operatorname{Max}T$.

$\operatorname{Max}R$ may therefore be viewed either as the plane $\operatorname{Max}S = \mathbb{A}_k^2$ where the point $\mathcal{Z}_S(x-1,y)$ and line $\mathcal{Z}_S(x)$ are together identified as a single closed point; or as the open subset of the plane 
$$\operatorname{Max}T \cong \mathcal{Z}_S(x)^c.$$
From the perspective of $T$, all the closed points of $\operatorname{Max}R$, including $I$ itself, appear zero-dimensional.

We claim that $T$ is the unique maximal depiction of $R$.
Indeed, let $S'$ be any depiction of $R$.
Then
\begin{multline*}
S' \subseteq \bigcap _{\mathfrak{q} \in Z_{S'/R}} S'_{\mathfrak{q}} 
\stackrel{\textsc{(i)}}{=} \bigcap_{\mathfrak{q} \in Z_{S'/R}} R_{\mathfrak{q} \cap R}
\stackrel{\textsc{(ii)}}{=} \bigcap_{\mathfrak{t} \in Z_{T/R}} R_{\mathfrak{t} \cap R}
\stackrel{\textsc{(iii)}}{=} \bigcap_{\mathfrak{t} \in Z_{T/R}} T_{\mathfrak{t}} \\
\stackrel{\textsc{(iv)}}{=} \bigcap_{\mathfrak{t} \in \operatorname{Spec}T \setminus \{ \mathfrak{t}_0 \}} T_{\mathfrak{t}} 
= \bigcap_{\mathfrak{t} \in \operatorname{Spec}T} T_{\mathfrak{t}} = T,
\end{multline*}
where (\textsc{i}) and (\textsc{iii}) hold by Lemma \ref{lemma 0}.3; (\textsc{ii}) holds by Lemma \ref{lemma 1}; and (\textsc{iv}) holds by (\ref{check}).
Therefore $S'$ is contained in $T$.

We now claim that $T$ is not saturated.
Set $\mathfrak{q} = xS$; then
$$\mathfrak{q} \cap R = \mathfrak{t}_0 \cap R = I.$$ 
Furthermore,
$$1 \stackrel{\textsc{(i)}}{\leq} \operatorname{ght}_R(I) \leq \operatorname{ht}_{S}(\mathfrak{q}) = 1 < 2 = \operatorname{ht}_T(\mathfrak{t}_0),$$
where (\textsc{i}) holds since $I \not = 0$.
 
Let $\mathfrak{t} \in \operatorname{Spec}T \setminus \{ \mathfrak{t}_0 \}$.
Since $\mathfrak{t}_0$ is a maximal ideal of $T$, (\ref{check}) implies $\mathfrak{t} \in Z_{T/R}$.
Whence $\mathfrak{t} \in \tilde{U}_{T/R}$ by Lemma \ref{lemma 0}.3.
Thus $\mathfrak{t} \not \in \iota_{T/R}^{-1}(I)$ by Lemma \ref{lemma 0}.4.
Therefore
$$\iota_{T/R}^{-1}(I) = \left\{ \mathfrak{t}_0 \right\}.$$
It follows that there is a prime $\mathfrak{p} \in \operatorname{Spec}R$, namely $I$, such that for each $\mathfrak{t} \in \iota_{T/R}^{-1}(\mathfrak{p})$,
$$\operatorname{ght}_R(\mathfrak{p}) < \operatorname{ht}_T(\mathfrak{t}).$$
}\end{Example}

In our final example, we show that $R$ need not be noetherian in codimension $1$ in order for $R$ to admit a saturated unique maximal depiction.

\begin{Example} \rm{
Consider the algebras
$$S := k[x,y] \ \ \ \text{ and } \ \ \ R := k + xS.$$
By Theorem \ref{wind}, $S$ is a saturated depiction of $R$, with 
\begin{equation*} \label{last}
U_{S/R} = \mathcal{Z}_S(x)^c.
\end{equation*}
Furthermore, by Proposition \ref{independent}, $R$ is not noetherian in codimension $1$ since the localization $R_{xS \cap R}$ is not noetherian.

We claim that $S$ is the unique maximal depiction of $R$.
Let $S'$ be any depiction of $R$.
Then by Proposition \ref{contains}, 
$$S' \subseteq \Gamma(U_{S/R}) = \Gamma(\mathcal{Z}_S(x)^c) = S[x^{-1}].$$
However, $\iota_{S[x^{-1}]/R}$ is not surjective since the ideal $xS \in \operatorname{Max}R$ does not have a pre-image in $S[x^{-1}]$.
Furthermore, $\iota_{S'/R}^{-1}(xS) \not = \emptyset$ since $S'$ is a depiction of $R$.
It follows that $S' \subseteq S$.
$S$ is therefore a saturated unique maximal depiction of $R$, even though $R$ is not noetherian in codimension $1$.
}\end{Example}

\textbf{Acknowledgments.}  The author would like to thank an anonymous referee for useful comments.
This article was completed while the author was a research fellow at the Heilbronn Institute for Mathematical Research at the University of Bristol. 

\bibliographystyle{hep}
\def\cprime{$'$} \def\cprime{$'$}

\end{document}